\documentclass[11pt]{amsart}
\usepackage{geometry}                
\usepackage{graphicx}
\usepackage{amssymb}
\usepackage{epstopdf}
\usepackage{xcolor}
\usepackage{hyperref}

\hypersetup{colorlinks=true,linkcolor=blue,urlcolor=blue,citecolor=red}

\let\originalleft\left
\let\originalright\right
\renewcommand{\left}{\mathopen{}\mathclose\bgroup\originalleft}
\renewcommand{\right}{\aftergroup\egroup\originalright}

\newtheorem{theorem}{Theorem}[]

\newtheorem{proposition}[theorem]{Proposition}

\newcommand{\R}{\mathbb{R}}
\newcommand{\s}{\mathcal{S}}
\newcommand{\F}{\mathcal{F}}

\DeclareMathOperator{\diag}{diag}

\title{A diffusion model for time-dependent compositional data}

\author{Lu Chen}
\address{Department of Mathematical Sciences, Kent State University, Kent, OH 44022, U.S.A.}
\email{chenlu81@hotmail.com}
\author{Omar De la Cruz Cabrera}
\address{Department of Mathematical Sciences, Kent State University, Kent, OH 44022, U.S.A.}
\email{odelacru@kent.edu}
\thanks{This research was supported in part by NSF grant DMS-1720259}
\author{Oana Mocioalca}
\address{Department of Mathematical Sciences, Kent State University, Kent, OH 44022, U.S.A.}
\email[Corresponding author]{omocioal@kent.edu}

\begin{document}
\maketitle

\begin{abstract}
We introduce a stochastic process for modeling the evolution in time
of compositional measurements (i.e., a vector of non-negative values that add up to a total of 1). This model
is a diffusion, as it is defined as the solution for a stochastic differential equation 
in the It\^o sense, and it has a Dirichlet distribution as
its steady distribution. We have named this process \emph{Dirichlet Diffusion} (DD).

As the process is confined to a manifold and the coefficients of the equation
are not globally Lipschitz, the usual theorems do not apply directly
and establishing the existence and properties of solutions requires a somewhat delicate analysis. 
We establish
the existence of strong solutions under the assumption 
that all the parameters of the Dirichlet distribution are greater than 2; for the general
case we were only able to establish the existence of weak solutions, but also that these solutions remain
confined to the closed simplex without need for reflecting boundaries. 

A useful feature of DD that it inherits from the Dirichlet distribution is the property of
\emph{aggregation}: If components are combined to create a coarser composition, the resulting
process is also a DD. This makes it useful,
for example, for jointly modeling the evolution of a microbiome grouping the microbe species
at different taxonomic levels.
\end{abstract}

\section{Introduction}
\subsection{Background}
\emph{Compositional data} are measurements of the relative abundances of the
components of a mix, aggregate, or combination; they are used when the
absolute quantities are not important, not relevant, and/or difficult to measure;
see, e.g., \cite{JA82}.
Compositional data are usually recorded so that
each data point is a vector of non-negative numbers that add up to a fixed positive total,
usually 1. 
Examples are found in fields as diverse as ecology (e.g., species abundance of trees
in a forest), geology (e.g., mineral composition of rock samples), political science (e.g.,
voter preference), genomics (e.g., transcript abundance in gene expression studies), 
and many more.

Due to the constraint on the total,
compositional data tend to exhibit negative correlations among its components:
larger relative abundance of one component tends to go together with smaller
relative abundances for the others. The particular difficulties that arise in the 
analysis of compositional data have been recognized
at least since 1897 by Karl Pearson, and many others afterwards \cite{JA82}.

Sometimes compositional data are collected in time series. This is typically because
compositional changes in time are of interest, or because repeated measurements 
are required and it is not practical to obtain independent samples. In our main example,
the study of microbiome composition, both motives apply: 
the molecular methods used to measure the composition (that is, the relative
proportions of different types of microbes) of a microbial community are noisy,
rendering single observations difficult to interpret; at the same time, changes
over time are often of interest. 
Figure~\ref{fig:metagenomicFlagPlots} shows time series data extracted from a study 
in which human subjects had microbiome composition
measurements taken in different parts of the body almost daily over six months  
\cite{CaporasoEtAl2011}.
The high variability indicates that a single measurement, taken on a particular day,
would not provide a useful picture; nevertheless, measurements on consecutive days
tend to be correlated, suggesting that the composition changes in a continuous,
stochastic manner.

\begin{figure}
\includegraphics[scale=0.75]{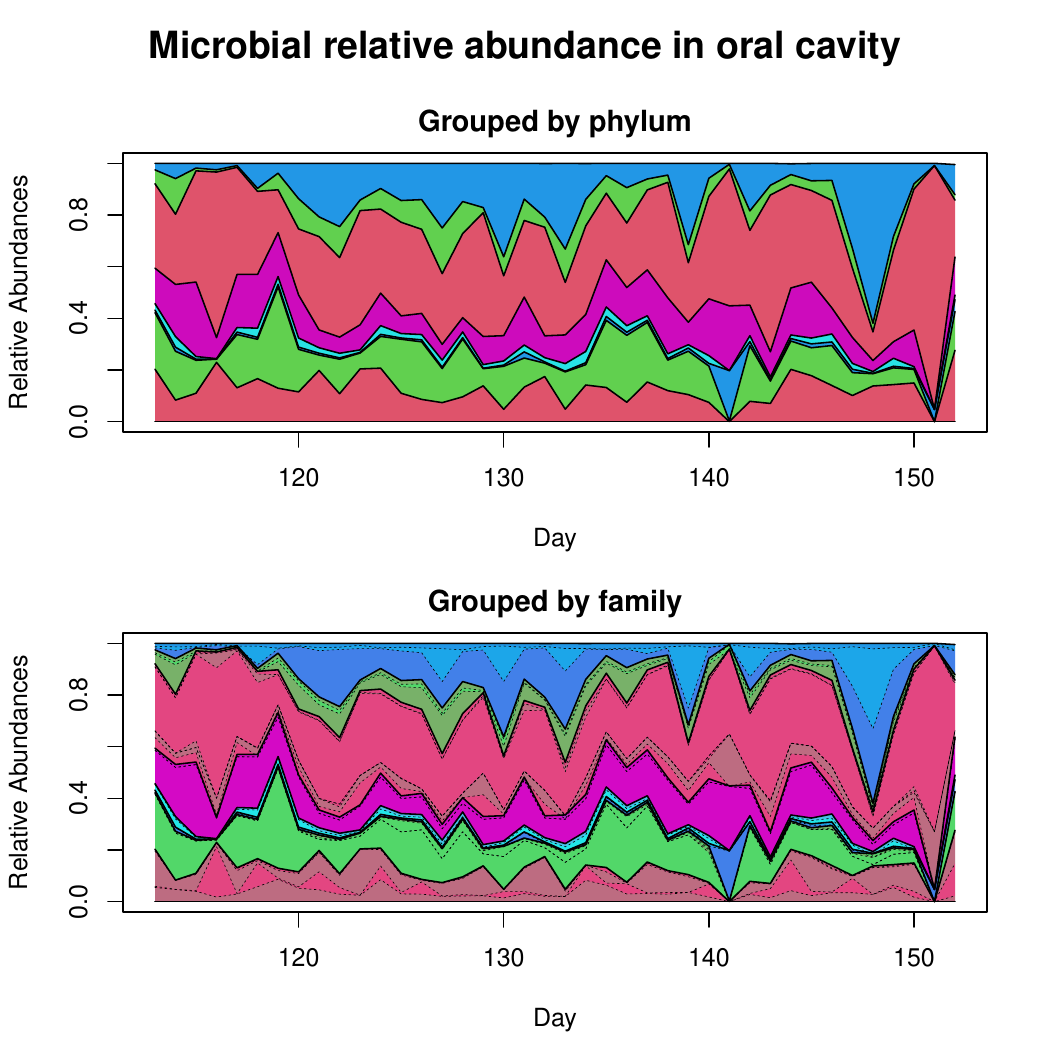}
\caption{Compositional time series data.
The plots show an uninterrupted 40-day stretch of microbiome composition measurements from the oral
cavity of individual F4,
from a study in which four human subjects
had measurements taken of the composition of the microbial
communities in different parts of the body almost daily over six months  \cite{CaporasoEtAl2011}.
The top panel shows compositions in which the microbe species
are grouped by phylum (17 different phyla); the bottom panel shows compositions in which the species
are grouped by family, a finer taxonomic rank (97 different families). Some categories are
too small to appear in the plots.
}\label{fig:metagenomicFlagPlots}
\end{figure}

In this article we introduce a continuous-time Markovian stochastic process $\left\{X(t)\right\}_{t\geq0}$,
which we call a \emph{Dirichlet Diffusion} (DD),
as a model for time-dependent
compositional data. 
It is obtained as the solution of a stochastic
differential equation (SDE); 
see Section~\ref{subsection:threeFormulations}
for the definition of the process. A key property is that it has a Dirichlet distribution as its invariant (or steady state)
probability distribution; thus, if $X(0)$ has that Dirichlet distribution,
each marginal $X(t)$ will also have that distribution; in practice, that would be the case, approximately, if
enough time has elapsed from the last perturbation of the system for it to reach its steady state.
In Section~\ref{sec:dirichletDistribution} we define the Dirichlet distribution and briefly discuss some of its properties.

\subsubsection*{Remarks about Notation}
Assume $d\geq2$ is fixed. We define the simplices $\s=\{x\in\R^d: x_1+\cdots+x_d=1\quad\mathrm{and}
\quad x_1,\dots,x_d>0\}$, and 
$\s^-=\{x\in\R^{d-1}:x_1+\cdots+x_{d-1}<1\quad\textrm{and}\quad x_1,\dots,x_{d-1}>0\}$. $\s^-$ is an open
subset of $\R^{d-1}$, while $\s$ is an open subset of a $(d-1)$-dimensional affine subspace of $\R^d$ (but it
is not open in $\R^d$). 
Also, we denote by $\mathcal{O}_+^d$ the open positive orthant $\{x\in\R^d:x_1,\dots,x_d>0\}$.
For simplicity, sometimes we will use the notation $S(x)=x_1+\cdots+x_d$.

For convenience, we also define two mappings, denoted by the superscripts ``$-$'' and ``$+$''. 
If $x=(x_1,\dots,x_d)\in\R^d$, then $x^-$ is defined as $(x_1,\dots,x_{d-1})\in\R^{d-1}$. If 
$x=(x_1,\dots,x_{d-1})\in\R^{d-1}$, then $x^+$ is defined as 
$\left(x_1,\dots,x_{d-1},1-\sum_{i=1}^{d-1}x_i\right)\in\R^d$. These two maps are bijections between
$\s$ and $\s^-$, and 
they are each other's inverses. We call $x^-$ the
\emph{truncation} or \emph{truncated version} of $x$.

Whenever matrix operations are used, vectors are assumed to be column vectors; the superscript $T$
denotes the matrix transpose. If $x$ is a vector, $\diag(x)$ is the diagonal matrix that
has the components of $x$ along its main diagonal.

\subsection{The Dirichlet distribution}\label{sec:dirichletDistribution}
Let $d\geq2$ and $\alpha=(\alpha_1,\dots,\alpha_d)$, with $\alpha_1,\dots,\alpha_d>0$, and define
$\alpha_0=\sum_{i=1}^d\alpha_i$.
The \emph{Dirichlet distribution} (see, e.g., \cite{kbj2000}) of order $d$ and
parameter vector $\alpha$
is a probability distribution for a random vector $X\in\R^d$ with density function proportional to
$\prod_{i=1}^{n} x_i^{\alpha_i-1}$, for $x\in\s$, and zero otherwise. 
However, this is a density only relative to the Lebesgue measure in $\R^{d-1}$ and a particular
one-to-one mapping between $\s$ and some open subset of $\R^{d-1}$.
Unfortunately, there is no canonical choice for such a parameterization of $\s$;
the truncation map defined above is a serviceable option, and the most common choice. Thus, we can
define the Dirichlet distribution as the distribution of a truncated
random vector $X^-=(X_1,\dots,X_{d-1})\in\R^{d-1}$ with support $\s^-$;
the missing component is implicitly taken to be the complement of the rest. Then
 \begin{equation}\label{eq:DirichletDensity}
 f(x^-;\alpha)= \frac{\Gamma (\alpha_0)}{\prod_{i=1}^d \Gamma(\alpha_i)} 
 \left(\prod_{i=1}^{d-1} x_i^{\alpha_i-1}\right)\left(1-\sum_{i=1}^{d-1} x_i \right)^{\alpha_d-1}\quad
 \textrm{for $x\in\s^-$,  zero otherwise}
 \end{equation}
is a proper density function. For convenience, we will abuse notation by regarding it as a density
for $X\in\R^d$, and may write $f(x)$ instead of $f(x^-)$.

The mean of $X\sim\mathrm{Dirichlet}(d,\alpha)$ is the vector $\mu=\alpha/\alpha_0$. An
alternative parameterization for the distribution is to use $\mu$ (or $\mu^-$) and $\alpha_0$; the
latter is called the \emph{concentration parameter}.

The Dirichlet distribution has the 
\emph{aggregation property}: If $(X_1,X_2,\dots,X_d)$ has distribution $\textrm{Dirichlet}(d,\alpha)$,
and  $\{B_1,B_2,\dots,B_k\}$ is a partition of the set $\{1,2,\dots,d\}$, then
$ \left(\sum_{i\in B_1} X_i,\sum_{i\in B_2} X_i,\dots,\sum_{i\in B_k}X_i\right)^T $ has
distribution
$\mathrm{Dirichlet}(k,\beta)$, where $\beta_j=\sum_{i\in B_j} \alpha_i$, for $j=1,\dots,k$.
The aggregation property is useful when modeling compositional data at different levels of detail, e.g., 
microbiome composition at the level of phylum, class, order, family, etc. 

For more details on the Dirichlet distribution, see, e.g.,~\cite{kbj2000,balakrishnan2004primer}

\section{Formulations of the SDE and Initial Value Problem}\label{subsection:threeFormulations}
We define the DD process as the 
solution of a particular SDE initial value problem. In 
this section we formulate three versions of the SDE, which we will later show 
are equivalent.

We will use SDEs of the form
\begin{equation}\label{eq:sde}
dX(t) = b(t,X)dt + \sigma(t,X)dW(t), 
\end{equation}
where $b(t,x)\in\R^d$ and $\sigma(t,x)\in\R^{d\times m}$ are defined on $[t_0,\infty)\times \bar{D}$,
with $D$ an open subset of $\R^d$. Given an initial condition $X(t_0)$, that is, a random vector
with values a.s.\ in $D$, a solution is a stochastic process $X(t)$ with values in $\bar{D}$ and indexed
by $t\in[t_0,\tau)$, where $\tau\leq\infty$ might be random. $W(t)$ is the standard Brownian motion
in $\R^m$, that is, $W_1(t),\dots,W_m(t)$ are independent instances of standard Brownian motion.
The SDEs considered in this article are time-homogeneous, i.e., $b$ and $\sigma$ do not depend
on $t$ (thus, we can assume $t_0=0$); however, we state Theorem~\ref{thm:khasminskii-corollary3-1}
(extracted from \cite{K12}) in the general case. SDEs in this paper are all in the It\^o sense; 
see, e.g., \cite{KS91} for a detailed exposition of the theory of Stochastic Analysis.

Fix $\theta>0$ and $\alpha=(\alpha_1,\dots,\alpha_d)^T$, with $\alpha_1,\dots,\alpha_d>0$. 
As above, call $\alpha_0=\sum_j\alpha_j$ and $\mu=\alpha/\alpha_0$.

\subsection{The canonical formulation.} Consider the SDE
\begin{equation} \label{canonicalSDE}
 dX=-\theta\left( {X}-\mu\right)dt+\sqrt{\frac{2\theta}{\alpha_0}}\diag\left(\sqrt{X}
\right)\left(I_d-\sqrt{X} \sqrt{X}^T\right)dW.
\end{equation} 
Here
$\sqrt{X}$ denotes the componentwise square root, and $\diag(v)$ is the diagonal matrix
that has the components of $v$ along its diagonal. 
The coefficients are defined throughout the open positive orthant of $\R^d$, but we will require
that the initial distribution satisfies $X(0)\in\s$ with probability 1. 

We will prove later that 
$X(t)$ remains in $\s$ for all $t\geq0$ (almost surely); thus, $\sqrt{X}$ is on the sphere of radius~1,
intersected with the open positive orthant, and $I_d-\sqrt{X} \sqrt{X}^T$ is the projection matrix
for the $(d-1)$-dimensional subspace of $\R^d$ of vectors orthogonal to $\sqrt{X}$.
Intuitively, $I_d-\sqrt{X} \sqrt{X}^T$ projects the increments of the $d$-dimensional
standard Brownian motion to move only in directions tangent to the sphere at the point $\sqrt{X}$;
after that, the components of the projected increments are rescaled by $\diag\left(\sqrt{X}\right)$,
making them smaller as $X$ approaches the boundaries.
The drift term corresponds to mean reversion, with rate $\theta$; since $\theta$ appears
also as a factor in the diffusion term, inside a square root, it can be interpreted as a time scale
factor.

\subsection{The adjusted  formulation.} Consider now the slightly modified SDE
\begin{equation} \label{adjustedSDE} 
dX=-\theta\left( {X}- \mu\right)dt+\sqrt{\frac{2\theta}{\alpha_0}}\diag\left(\sqrt{X}
\right)\left(S(X)I_d-\sqrt{X} \sqrt{X}^T\right)dW,
\end{equation} 
Here $S(X)=\sum_{i=1}^dX_j$.
Assuming as above that the initial distribution satisfies $X(0)\in\s$ with probability 1,
we will prove that, under some conditions, $X(t)$ remains in $\s$ for all $t\geq0$, meaning that $S(X)$
will be constantly equal to 1, making this formulation equivalent to the canonical formulation
\eqref{canonicalSDE}.
However, \eqref{adjustedSDE} has better properties
for $X$ outside of $\s$, leading to theoretical advantages in some proofs, as well as
better stability in simulations.

\subsection{The truncated formulation.} Here $X^-$ takes values in $\R^{d-1}$, while $W$ is
still a standard Wiener process in $\R^d$. Also, $\alpha^-=(\alpha_1,\dots,\alpha_{d-1})^T$,
while $\alpha_0$ still denotes $\alpha_1+\cdots+\alpha_d$. Thus, $\sigma(x)$ in this case
is a matrix of size $(d-1)\times d$.

\begin{equation} \label{truncatedSDE}
 dX^-=-\theta\left( {X^-}- \frac{\alpha^-}{\alpha_0}\right)dt+\sqrt{\frac{2\theta}{\alpha_0}}\diag\left(\sqrt{X^-}
\right)\left(I_d^--\sqrt{X^-} \sqrt{X^+}^T\right)dW
\end{equation} 
where $X^+\in\R^d$ contains all the components from $X^-$ plus one last component
equal to $1-\sum_{i=1}^{d-1} X_j$, and $I_d^-$ is obtained from $I_d$ by dropping the bottom row.
The initial distribution must satisfy the condition that $X^-(0)\in\s^-$ with probability 1.

\section{Existence of Solutions}

This section has two parts. First, we show that the equations of the previous section (with the
condition that $X(0)\in\s$ a.s.) have weak solutions that are equivalent and unique in distribution;
these solutions might hit the boundary of $\s$ but do not exit $\bar\s$ for any $t\geq0$ (a.s.).

In the second part we show that, if we add the condition that $\alpha_1,\dots,\alpha_d>2$, then
the solutions do not hit the boundary for any $t\geq0$ (a.s.). Furthermore, in this case we can
prove the existence of strong solutions.

\subsection{Existence of weak solutions}

Recall that a weak solution to an SDE like~\eqref{eq:sde} has the form 
$\left((\Omega,\F,P),\{\F_t\},W(t),X(t)\right)$,
where $(\Omega,\F,P)$ is a probability space, $\{\F_t\}$ is a filtration of sub-$\sigma$-fields of $\F$,
$W(t)$ is a Brownian motion adapted to $\{\F_t\}$, and $X(t)$ is a continuous process adapted
to $\{\F_t\}$, 
satisfying 
that $P\left[\int_0^t\{|b_i(s,X_s)|+\sigma^2_{ij}(s,X_s)\}\,ds<\infty \right] = 1$
for all $i=1,\dots,m$, $j=1,\dots,d$, and $t\geq0$,
as well as satisfying the integral form of the SDE, $P$-a.s. (see, e.g., \cite{KS91}).

\begin{theorem}\label{thm:existenceweaksolutions}
The SDEs \eqref{canonicalSDE}, \eqref{adjustedSDE}, and \eqref{truncatedSDE} have weak
solutions. Specifically:
\begin{enumerate}
\renewcommand{\theenumi}{\alph{enumi}}

\item Given a probability distribution on $\R^d$ with support contained in $\s$, there exist weak
solutions for SDEs  \eqref{canonicalSDE} and \eqref{adjustedSDE} such that $X(0)$ has the 
prescribed distribution, and $X(t)\in\bar\s$ for all $t\geq0$, $P$-a.s.
\item Given a probability distribution on $\R^{d-1}$ with support contained in $\s^-$, there exists a weak
solution for SDE \eqref{truncatedSDE} such that $X^-(0)$ has the 
prescribed distribution, and $X^-(t)\in\bar\s^-$ for all $t\geq0$, $P$-a.s.
\end{enumerate}
Furthermore, these solutions are unique in distribution, and they are equivalent, in the sense that:
\begin{enumerate}
\setcounter{enumi}{2}
\renewcommand{\theenumi}{\alph{enumi}}

\item Any weak solution for \eqref{canonicalSDE} is a solution for \eqref{adjustedSDE} and viceversa.
\item If $\left((\Omega,\F,P),\{\F_t\},W(t),X(t)\right)$ is a weak solution for \eqref{canonicalSDE} and \eqref{adjustedSDE},
then $((\Omega,\F,P),\allowbreak\{\F_t\},W(t),X^-(t))$ is a weak solution for \eqref{truncatedSDE}.
\item If $\left((\Omega,\F,P),\{\F_t\},W(t),X^-(t)\right)$ is a weak solution for \eqref{truncatedSDE},
then $((\Omega,\F,P),\{\F_t\},\allowbreak W(t),X^+(t))$ is a weak solution for \eqref{canonicalSDE} and \eqref{adjustedSDE}.
\end{enumerate}
\end{theorem}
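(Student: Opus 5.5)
The plan is to reduce everything to one well-posed martingale problem on the closed simplex: the multi-type Wright--Fisher diffusion with parent-independent mutation. The claims about the three SDEs will then follow by algebra on the coefficients.

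\textbf{Step 1: the diffusion matrix.} Write $c=2\theta/\alpha_0$ and $P(x)=I_d-\sqrt{x}\sqrt{x}^T$. When $S(x)=1$ we have $|\sqrt{x}|=1$, so $P$ is an orthogonal projection and $P^2=P$. Hence
\[
\sigma\sigma^T=c\,\diag(\sqrt{x})P\,\diag(\sqrt{x})=c\left(\diag(x)-xx^T\right).
\]
For the truncated equation \eqref{truncatedSDE}, $I_d^--\sqrt{x^-}\sqrt{x^+}^T$ consists of the first $d-1$ rows of $P(x^+)$. Therefore $\sigma\sigma^T=c\left(\diag(x^-)-x^-{x^-}^T\right)=:a(x^-)$, with drift $b(x^-)=-\theta(x^--\alpha^-/\alpha_0)$. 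This is exactly the generator $L$ of the Wright--Fisher diffusion with mutation.

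\textbf{Step 2: well-posedness of the martingale problem on $\bar\s^-$.}
\begin{itemize}
\item \emph{Extension.} Extend $a$ and $b$ continuously and boundedly to $\R^{d-1}$ by composing with the nearest-point projection onto $\bar\s^-$. Then $a$ stays positive semidefinite, and Stroock--Varadhan gives existence of a solution to the martingale problem for any initial law.
\item \emph{Confinement.} Each coordinate satisfies $d\langle X_i\rangle\le C|X_i|\,dt$ near $X_i=0$, so by the occupation-times formula the local time of $X_i$ at $0$ vanishes. Itô--Tanaka then gives $dX_i^-=-1_{\{X_i<0\}}\theta(\mu_i-X_i)\,dt+dM$, and on $\{X_i<0\}$ the drift $\theta(\mu_i-X_i)$ is positive. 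Taking expectations yields $E X_i^-(t)\le0$, so $X_i\ge0$ for all $t$. The same argument applied to $1-\sum_{i<d}X_i$, whose drift near $0$ is $\theta\mu_d>0$, confines the process to $\bar\s^-$. There the extension is irrelevant.
\item \emph{Uniqueness.} $L$ maps polynomials of degree $\le n$ into themselves. So the moments of $X(t)$ solve a finite linear ODE system, which determines the one-dimensional marginals; since $\bar\s^-$ is compact these moments determine the laws. Uniqueness of one-dimensional marginals for every initial law implies uniqueness of the martingale problem (Ethier--Kurtz). Alternatively, cite Ethier (1976) directly.
\item \emph{From martingale problem to weak solution.} The representation theorem (Karatzas--Shreve, Prop.~5.4.6) turns the martingale solution into a weak solution of \eqref{truncatedSDE} with the given $\sigma$ on an enlarged space. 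Uniqueness in law follows from uniqueness of the martingale problem. This proves (b) and uniqueness for \eqref{truncatedSDE}.
\end{itemize}

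\textbf{Step 3: equivalences.} For (e), put $X=X^+$. The last component satisfies $dX_d=-\sum_{i<d}dX_i$, and this is row $d$ of \eqref{canonicalSDE} because the columns of $P$ times $\diag(\sqrt{x})$ sum appropriately: $\mathbf{1}^T\diag(\sqrt{x})P=\sqrt{x}^T-|\sqrt{x}|^2\sqrt{x}^T=0$ on $\s$. Moreover $X\in\bar\s$ with $S(X)=1$, so \eqref{canonicalSDE} and \eqref{adjustedSDE} coincide. This also gives (a).

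For (c), the first task is to show that any solution of either equation has $S\equiv1$; with that established, the two SDEs have literally the same coefficients, which is (c), and (d) is then immediate by dropping the last row.
\begin{itemize}
\item For \eqref{adjustedSDE}, the noise in $dS$ is $c^{1/2}\left(S\sqrt{X}^T-|\sqrt{X}|^2\sqrt{X}^T\right)dW=0$, since $|\sqrt{X}|^2=S$. So $dS=-\theta(S-1)\,dt$ and $S\equiv1$.
\item For \eqref{canonicalSDE}, $Y=S-1$ solves $dY=-\theta Y\,dt-c^{1/2}Y\sqrt{X}^TdW$ with $Y(0)=0$. Localizing where $|X|$ is bounded and applying Gronwall to $E[Y^2]$ gives $Y\equiv0$.
\end{itemize}
Uniqueness in law transfers to all three equations through (c)--(e).

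\textbf{Main obstacle.} I expect the main obstacle to be the uniqueness in Step 2, since $\sqrt{x}$ is not Lipschitz at the boundary. The polynomial-moment duality is the route I would try first. The confinement argument also needs care, to ensure the local time genuinely vanishes at the faces and corners of the simplex.
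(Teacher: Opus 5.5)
Your argument is correct, but it follows a different route from the paper's.

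The paper stays in the ambient coordinates of \eqref{canonicalSDE}. It extends the coefficients off the cube $[0,1]^d$ by projecting onto the cube and takes a weak solution from Skorohod's theorem. It then proves $X_i\ge0$ by writing each coordinate as $dX_i=g(X_i)\,dt+\sqrt{\max(0,X_i)}\,dM$, with $M$ a local martingale, and comparing against the zero solution. Finally it gets $S\equiv1$ from uniqueness for the linear equation satisfied by $S$.

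You work instead in the chart $\bar\s^-$, where the sum constraint is built in, and you replace comparison by vanishing local time plus Tanaka. This step is sound. The nearest-point projection onto $\bar\s^-$ has the form $\pi(x)_j=(x_j-\lambda)^+$ with $\lambda\ge0$, which gives $\pi(x)_i\le x_i^+$ and $1-\sum_{j<d}\pi(x)_j\le(1-\sum_{j<d}x_j)^+$. Hence your quadratic-variation bounds hold for $X_i$ and for $1-\sum_{j<d}X_j$, with no trouble at corners. One small correction: on $\{X_i\le0\}$ the extended drift is $\theta\mu_i$, not $\theta(\mu_i-X_i)$, and the extended $a_{ii}$ vanishes there. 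So Tanaka gives $X_i^-(t)=-\theta\mu_i\int_0^t 1_{\{X_i\le0\}}\,ds$, and the conclusion even holds pathwise.

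The substantive difference is uniqueness in law. The paper attributes it to Skorohod's theorem, which gives only existence, and the degenerate, non-Lipschitz diffusion puts the standard uniqueness theorems out of reach. Your argument actually proves it: the generator preserves polynomial degree, the process is confined to a compact set, and the Ethier--Kurtz one-dimensional-marginals criterion then applies. Your algebra for (c)--(e), which the appendix does not spell out, transfers this uniqueness to all three equations. Your observation that the noise in $dS$ vanishes identically for \eqref{adjustedSDE} is the cleanest way to handle that case.

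As for what each route buys:
\begin{itemize}
\item The paper's route is shorter and rests on a standard comparison theorem, but as written it delivers only existence and confinement.
\item Yours costs more machinery, and your uniqueness step relies on the affine-drift/quadratic-covariance structure, which would not survive, for example, the covariance-modified generalizations mentioned in the conclusions. In return it gives the full well-posedness the statement claims.
\item Your identification of the generator as $2\theta/\alpha_0$ times a Wright--Fisher generator with mutation parameters $\alpha_i$ also gives, as a bonus, the Dirichlet$(\alpha)$ stationary law for every $\alpha_i>0$, not only for $\alpha_i>2$.
\end{itemize}
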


The full proof of Theorem~\ref{thm:existenceweaksolutions} can be found in the Appendix.
Here is a short, informal, description of that proof: 
First we extend the drift and diffusion terms, which are continuous and bounded on
$\bar\s$, to continuous and bounded functions defined on $\R^d$, which guarantees the existence of
a weak solution $X(t)$, for $t\geq0$. Second, we prove that the solution satisfies $X(t)\in\bar\s$ for all $t\geq0$,
by the following argument: The first time (if ever) that $X(t)$ hits the boundary of $\bar\s$, the
component of the diffusion term orthogonal to the boundary becomes zero, while the drift term
``pulls'' the solution back towards the interior. Thus, $X(t)$ cannot escape $\bar\s$. Therefore,
since the extended drift and diffusion terms agree with the original ones on $\bar\s$, $\{X(t),t\geq0\}$
is a solution for the original SDE.

\subsection{Existence of strong solutions when $\min(\alpha_1,\dots,\alpha_d)>2$}\label{sec:existenceStrong}
\begin{proposition}
The SDEs (\ref{canonicalSDE}) and (\ref{adjustedSDE}), 
together
with the corresponding initial values, have unique strong solutions of the form 
$(X(t),0\leq t<\tau_1)$ and 
$(X(t),0\leq t<\tau_2)$, respectively,
in the open positive orthant
of $\R^d$.
The SDE (\ref{truncatedSDE})
has a unique strong solution of the form $(X^-(t),0\leq t<\tau_3)$ 
in the open set $\s^-\subset\R^{d-1}$. Here $\tau_1$, $\tau_2$, and $\tau_3$ are
stopping times w.r.t.\ the corresponding Wiener process, and they are maximal in the sense that the 
solutions cannot be extended beyond them (in the corresponding open set).
\end{proposition}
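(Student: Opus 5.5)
This is the standard local existence and uniqueness result for SDEs whose coefficients are locally Lipschitz on an open set. I would apply it separately to each of the three formulations, using the version stated in \cite{K12} and quoted as Theorem~\ref{thm:khasminskii-corollary3-1}. That theorem says the following. Let $D$ be open and let $D_n$ be an exhausting sequence of bounded open sets with $\bar D_n\subset D$. If $b$ and $\sigma$ are Lipschitz on each $D_n$, then there is a unique strong solution up to the exit time $\tau=\lim_n\tau_n$, where $\tau_n$ is the first exit time from $D_n$. This $\tau$ is maximal, in the sense that the solution cannot be continued inside $D$ beyond it. So the whole proof reduces to checking that the coefficients are locally Lipschitz on the relevant open sets. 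I would take $D=\mathcal{O}_+^d$ for \eqref{canonicalSDE} and \eqref{adjustedSDE}, and $D=\s^-$ for \eqref{truncatedSDE}.

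\emph{Local Lipschitz property.} For \eqref{canonicalSDE}, the drift $-\theta(x-\mu)$ is affine and hence globally Lipschitz. The diffusion coefficient has entries
\[
\sqrt{2\theta/\alpha_0}\,\bigl(\sqrt{x_i}\,\delta_{ij}-x_i\sqrt{x_j}\bigr),
\]
which are polynomials in $\sqrt{x_1},\dots,\sqrt{x_d}$ and $x_1,\dots,x_d$. On $\mathcal{O}_+^d$ each map $x_i\mapsto\sqrt{x_i}$ is $C^\infty$. Therefore $\sigma$ is $C^1$ on $\mathcal{O}_+^d$, so it is Lipschitz on each compact subset, and in particular on $D_n=\{x:1/n<x_i<n \text{ for all } i\}$.

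The adjusted coefficient in \eqref{adjustedSDE} only adds the factor $S(x)$, which is linear, so the same argument applies. For \eqref{truncatedSDE}, I would take $D_n=\{x^-\in\s^-: x_i>1/n \text{ for } i<d,\ 1-\sum_{i<d}x_i>1/n\}$. The entries involve $\sqrt{x_i}$ for $i<d$ and $\sqrt{1-\sum_{i<d}x_i}$, and all of these arguments are bounded away from $0$ on $\bar D_n$. Hence the coefficients are $C^1$ on a neighborhood of $\bar D_n$, and Lipschitz there. None of this uses $\alpha_i>2$; that condition only matters later, when showing $\tau=\infty$.

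\emph{Applying the theorem.} The initial value lies in $\s\subset\mathcal{O}_+^d$ (respectively in $\s^-$) almost surely. It is independent of $W$ and $\F_0$-measurable, so it belongs to some $D_n$ with probability tending to $1$. To handle a random initial condition that is not confined to a single $D_n$, I would localize on the events $\{X(0)\in D_n\}$. On each such event I would construct the solution with Lipschitz extensions of the coefficients outside $D_{n+1}$, which exist because $\bar D_{n+1}$ is compact and the coefficients are $C^1$ there. Pathwise uniqueness for the extended equations makes these solutions consistent across different $n$, so they paste into a single process up to $\tau=\lim_m\tau_m$. The times $\tau_1,\tau_2,\tau_3$ are limits of hitting times of closed sets by continuous adapted processes, so they are stopping times with respect to the filtration generated by $W$ and the initial value. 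Maximality holds because on $\{\tau<\infty\}$ the path leaves every $D_n$, that is, it approaches $\partial D$ (or blows up).

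\emph{Main obstacle.} I expect the only delicate point to be the bookkeeping: matching the hypotheses of Theorem~\ref{thm:khasminskii-corollary3-1} with an unbounded family of domains and a random initial condition, and making precise what ``maximal'' means. If the cited theorem already allows a general open $D$ and an $\F_0$-measurable initial condition, all three statements follow immediately from the local Lipschitz verification above.
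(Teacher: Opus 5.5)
Your argument is essentially the paper's: you show the coefficients are $C^1$, hence locally Lipschitz, on $\mathcal{O}_+^d$ (respectively $\s^-$), replace them by globally Lipschitz versions that agree on an exhausting sequence $D_n$, paste the resulting solutions using pathwise uniqueness, and set $\tau=\lim_n\tau_n$. One correction to the citation: Theorem~\ref{thm:khasminskii-corollary3-1} as quoted is not this local result, since it requires the Lyapunov Conditions~3--4 and concludes that the solution never leaves $D$, so your proof should rest on the localization you spell out yourself (the construction in Section~3.4 of \cite{K12}, which is what the paper sketches) rather than on that corollary.
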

\begin{proof}
The coefficients are continuously differentiable, and therefore locally Lipschitz, on the corresponding open sets:
the open positive orthant, for SDEs~\eqref{canonicalSDE} and \eqref{adjustedSDE}, and 
$\s^-\subset\R^{d-1}$ for SDE~\eqref{truncatedSDE}; to treat all cases simultaneously, call such open set $U$.
Existence and uniqueness of strong solutions up to an explosion time follow from well known 
arguments, which we only briefly sketch here (these arguments can be found in, e.g., \cite{K12}, Section~3.4).

Let $U_r$, $r=1,2,\dots$ be a sequence of open sets such that for all $r$, $\bar{U}_r$ is
compact and contained in $U_{r+1}$, and $\bigcup_r\bar{U}_r=U$. For each $r$, the equation coefficients
can be modified so that they match the original coefficients within $U_r$, equal 0 outside $U_{r+1}$, yet their
derivatives exist, are continuous,  and are bounded
on all of $U$, so that the global Lipschitz and linear growth conditions are satisfied and therefore
a unique strong solution $X^{(r)}(t)$ exists (by classical theorems, e.g., \cite{KS91}, Theorem~5.2.9)
and has values in $U$ for $t\in[0,\infty)$. Let 
$\tau^{(r)}=\inf_{t}\{X^{(r)}(t)\notin U_r\}$; this is a stopping time, and $r\leq r'$ implies $\tau^{(r)}\leq\tau^{(r')}$
almost surely. Then $\tau := \lim_r \tau^{(r)}$ is a stopping time (the explosion time), and a solution $\{X(t):0\leq t < \tau\}$
can be defined so that for each $r$, $X(t)=X^{(r)}(t)$ for $0\leq t < \tau^{(r)}$. This solution is well defined and unique.
\end{proof}

\begin{proposition}\label{prop:solAdjustCanon}
If $(X(t),0\leq t<\tau)$ is a solution of (\ref{adjustedSDE}), where $\tau$ is a stopping time, 
and the initial condition $X(0)$ satisfies  $P[X(0)\in\s]=1$, then $(X(t),0\leq t<\tau)$ is a solution
of (\ref{canonicalSDE}).
\end{proposition}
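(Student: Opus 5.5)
The plan is to show that $S(X(t))=1$ for all $0\le t<\tau$ almost surely. Once this holds, the diffusion coefficient of \eqref{adjustedSDE} evaluated along the path, $\diag(\sqrt{X})\bigl(S(X)I_d-\sqrt{X}\sqrt{X}^T\bigr)$, coincides with that of \eqref{canonicalSDE}. The drifts are identical, so the integral form of \eqref{adjustedSDE} becomes, pathwise, the integral form of \eqref{canonicalSDE} on $[0,\tau)$.

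To get the invariance of $S$, I will write the SDE satisfied by the scalar process $Y(t)=S(X(t))-1$. Since $S$ is linear, Itô's formula has no second-order term. Summing the components gives $dY=\mathbf{1}^T dX$.

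\emph{Drift.} We have $\mathbf{1}^T(-\theta(X-\mu))=-\theta(S(X)-1)=-\theta Y$, because $\mathbf{1}^T\mu=1$.

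\emph{Diffusion.} Compute $\mathbf{1}^T\diag(\sqrt{X})=\sqrt{X}^T$. Then
\[
\sqrt{X}^T\bigl(S(X)I_d-\sqrt{X}\sqrt{X}^T\bigr)=S(X)\sqrt{X}^T-(\sqrt{X}^T\sqrt{X})\sqrt{X}^T=0,
\]
since $\sqrt{X}^T\sqrt{X}=\sum_i X_i=S(X)$. This is exactly where the adjustment $S(X)I_d$ matters: the identity holds for every $X$ in the orthant, not only on $\s$.

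Hence $dY=-\theta Y\,dt$ on $[0,\tau)$, i.e.\ $Y(t)=Y(0)-\theta\int_0^tY(s)\,ds$ pathwise. This linear ODE has the unique solution $Y(t)=Y(0)e^{-\theta t}$. Since $Y(0)=0$ a.s., we get $Y\equiv0$ on $[0,\tau)$ a.s.

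The main point needing care is making the stochastic integrals rigorous up to the possibly explosive time $\tau$. I would localize with stopping times $\tau_r\uparrow\tau$, such as exit times from the sets $U_r$ of the previous proposition. On each $[0,\tau_r]$ the integrands are bounded, so $\mathbf{1}^T$ may be moved inside the stochastic integral (the integrand of the summed $dW$ integral is identically zero), and the ODE argument applies on each $[0,\tau_r]$. Letting $r\to\infty$ gives $S(X(t))=1$ for all $t<\tau$ a.s. Substituting into the stochastic integral of \eqref{adjustedSDE} then shows the integrands agree, so the processes are indistinguishable solutions of \eqref{canonicalSDE}.
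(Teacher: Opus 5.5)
Your proof is correct. It follows the same overall strategy as the paper: compute $dS(X)$ by linearity, show $S(X(t))\equiv 1$ via a uniqueness argument, and substitute back so the two diffusion coefficients coincide along the path.

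The difference lies in the key computation. The paper's displayed calculation uses the factor $I_d-\sqrt{X}\sqrt{X}^T$, which is the coefficient of \eqref{canonicalSDE}, not of \eqref{adjustedSDE}. It therefore obtains a nonzero noise term $\sqrt{2\theta/\alpha_0}\,(1-S)\sqrt{X}^T\,dW$. It then concludes by uniqueness for the linear equation $dS=(1-S)\,dM$ driven by a continuous semimartingale, implicitly absorbing the drift $\theta(1-S)\,dt$ and the constant into $M$.

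You compute with $S(X)I_d$, which is what the hypothesis actually gives. You observe that the noise cancels identically on the whole orthant, so $S-1$ solves the deterministic ODE $\dot y=-\theta y$ pathwise, and elementary ODE uniqueness suffices.

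Your route is more elementary, and it is the one that matches the stated direction. The paper's calculation, read literally, proves the converse: a solution of \eqref{canonicalSDE} started in $\s$ keeps $S\equiv1$ and hence solves \eqref{adjustedSDE}. The two computations together give part (c) of Theorem~\ref{thm:existenceweaksolutions}.

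Your localization step is fine; use $\tau_r\wedge\tau$ if $\tau$ is not the explosion time. It is not strictly needed, though, since linearity of the stochastic integral already lets you move $\mathbf{1}^T$ inside.
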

\begin{proof}
We will show that $S(X(t))$ is constantly equal to 1, and thus since $X$ satisfies (\ref{adjustedSDE}),
it also satisfies (\ref{canonicalSDE}).

Let $S=S(X(t))$. Then, by Ito's formula, $S$ is the solution of an SDE with drift term $-\theta 1^T(X-\alpha/\alpha_0)dt$
(here $1^T = (1,\dots,1)$), which equals $\theta(1-S)dt$, and diffusion term given by:
\begin{align*}
\sqrt{\frac{2\theta}{\alpha_0}}1^T\diag\left(\sqrt{X}\right)\left(I_d-\sqrt{X}\sqrt{X}^T\right)dW &= 
\sqrt{\frac{2\theta}{\alpha_0}}\left(\sqrt{X}^T - \sqrt{X}^T\sqrt{X}\sqrt{X}^T \right)dW\\
 &=\sqrt{\frac{2\theta}{\alpha_0}}(1-S)\sqrt{X}^TdW,
\end{align*}
since $\sqrt{X}^T\sqrt{X} = S$. Then $M(t):=\int_0^t\sqrt{X}^TdW$ is a semimartingale, and $S$
satisfies the scalar SDE $dS = (1-S)dM$.
Clearly, $S$ constantly equal to 1 satisfies this equation and the initial condition $S(0)=1$;
by uniqueness, we conclude that $S\equiv 1$.
\end{proof}

Proposition~\ref{prop:solAdjustCanon} implies that the explosion time for the solution of (\ref{canonicalSDE})
is greater than or equal to the explosion time for the solution of (\ref{adjustedSDE}). The following
theorem proves that, if $\alpha_1,\dots,\alpha_d>2$, then the latter is $+\infty$, and therefore neither process leaves
the open positive orthant in finite time (a.s.).

\begin{theorem}\label{thm:noExplosionAdjusted}
Assume that $\alpha_1,\dots,\alpha_d>2$. Then the explosion time $\tau$ for the solution to (\ref{adjustedSDE})
is $+\infty$ (i.e., the process does not hit the boundaries), with probability~1.
\end{theorem}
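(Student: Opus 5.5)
We plan to prove Theorem~\ref{thm:noExplosionAdjusted} with a Lyapunov-function argument in the style of Khasminskii (\cite{K12}, Theorem~3.5 / Corollary~3.1, which the paper states as Theorem~\ref{thm:khasminskii-corollary3-1}). By Proposition~\ref{prop:solAdjustCanon}, on $[0,\tau)$ the solution satisfies $S(X(t))\equiv 1$. Hence the only way $X$ can leave the open orthant $U$ in finite time is for some component $X_i$ to approach $0$; escape to infinity is impossible because all components stay in $(0,1)$. Take the exhausting sets $U_r=\{x\in\mathcal{O}_+^d:\ x_i>1/r\ \text{for all } i,\ |x|<r\}$. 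It then suffices to exhibit a $C^2$ function $V\ge 0$ on $U$ that satisfies $V(x)\to\infty$ as $\min_i x_i\to 0$ within $\s$ and $LV\le c$ on $\s$ for a constant $c$. Here $L$ is the generator of (\ref{adjustedSDE}).

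The natural candidate is
\[
V(x)=-\sum_{i=1}^d \log x_i ,
\]
which is nonnegative on $\s$ because $x_i<1$ there. To compute $LV$ we use the form of the diffusion on $\s$: there $S=1$, so the coefficient reduces to that of (\ref{canonicalSDE}) and
\[
a=\sigma\sigma^T=\frac{2\theta}{\alpha_0}\bigl(\diag(x)-xx^T\bigr),
\]
so that $a_{ii}=\frac{2\theta}{\alpha_0}x_i(1-x_i)$. Since $\partial_i V=-1/x_i$ and $\partial_{ii}V=1/x_i^2$, we get
\[
LV=\sum_i \frac{\theta(x_i-\mu_i)}{x_i}+\frac{\theta}{\alpha_0}\sum_i\frac{1-x_i}{x_i}
=\theta d+\frac{\theta}{\alpha_0}\sum_i\frac{1-\alpha_i}{x_i}-\frac{\theta}{\alpha_0}.
\]
The condition $\alpha_i>1$ already makes every singular term negative, so $LV\le\theta d$ on $\s$. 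The hypothesis $\alpha_i>2$ leaves extra room, which could be used with $V=\sum_i x_i^{-1}$ if one prefers a power-type $V$. For that choice the diffusion contributes $\frac{\theta}{\alpha_0}\frac{2(1-x_i)}{x_i^2}$ and the drift contributes $-\theta\frac{x_i-\mu_i}{x_i^2}$, so the coefficient of $x_i^{-2}$ is $\frac{\theta}{\alpha_0}(2-\alpha_i)<0$. This is presumably where the authors' threshold of $2$ comes from. We would use whichever version is cleanest, most likely the power one, to match the stated hypothesis.

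With $V$ in hand the standard argument runs as follows. Set $\tau_r=\inf\{t: X(t)\notin U_r\}$ and apply It\^o's formula to $e^{-ct}V(X(t))$ (or to $V$ directly) up to $t\wedge\tau_r$. The stochastic integral is a true martingale there because the coefficients are bounded on $\bar U_r$. This gives $E\,V(X(t\wedge\tau_r))\le E\,V(X(0))+ct$. If $E\,V(X(0))=\infty$, we first condition on $\{V(X(0))\le n\}$. Since $V\ge \inf_{\partial U_r\cap\s}V=:v_r\to\infty$ on $\{\tau_r\le t\}$, we obtain $P(\tau_r\le t)\le (E\,V(X(0))+ct)/v_r\to 0$. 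Hence $\tau=\lim\tau_r>t$ a.s. for every $t$, and therefore $\tau=\infty$ a.s.

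The main obstacle is justifying the restriction to $\s$. The generator must be evaluated only on $\s$, and the sets $U_r$ must be chosen so that their boundaries relevant to $\s$ correspond to some $x_i=1/r$. Both points rely on Proposition~\ref{prop:solAdjustCanon} holding up to each $\tau_r$, so that $S\equiv1$ when the It\^o/Dynkin step is applied. We would state this carefully, using the localized solutions $X^{(r)}$ from the preceding proposition and the invariance $S\equiv1$ on $[0,\tau_r)$.
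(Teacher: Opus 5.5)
Your argument is correct, but it takes a different route from the paper. The paper applies Khasminskii's criterion (Theorem~\ref{thm:khasminskii-corollary3-1}) as a black box on the open set $D=\{x:\ x_i>0,\ S(x)<q\}$ with $q=\sqrt{\min_i\alpha_i/2}$ and $V=\sum_i 1/x_i+1/(q-S(x))$. It verifies Conditions~1--2 and $LV\le\theta V$ for the adjusted generator on all of $D$, off the simplex as well. The hypothesis $\alpha_i>2$ enters through the choice of $q$: the $1/x_i$ part needs $2S(x)^2<\alpha_i$ on $D$, and one needs $q>1$ so that $\s\subset D$. You instead confine the path to $\s$ and run the localization/Dynkin argument by hand. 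So $LV$ is needed only on $\s$, and the auxiliary barrier $1/(q-S)$ disappears.

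The confinement step you single out as the main obstacle is immediate for \eqref{adjustedSDE}. Since $\sqrt{x}^T\sqrt{x}=S(x)$, one has $1^T\diag(\sqrt{x})\bigl(S(x)I_d-\sqrt{x}\sqrt{x}^T\bigr)=0$. Hence $dS=\theta(1-S)\,dt$ pathwise, and $S(0)=1$ gives $S\equiv1$ on $[0,\tau)$.

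Your route also gives a stronger result. With $V=-\sum_i\log x_i$ one gets $LV=\theta d(1-1/\alpha_0)+\frac{\theta}{\alpha_0}\sum_i(1-\alpha_i)/x_i$ on $\s$; your constant should be $-\theta d/\alpha_0$, not $-\theta/\alpha_0$. So non-explosion already holds for $\min_i\alpha_i\ge1$. This threshold is sharp: on $\s$ the marginal $X_i$ is a one-dimensional Jacobi diffusion whose scale density near $0$ is of order $x^{-\alpha_i}$, so the face $x_i=0$ is attainable exactly when $\alpha_i<1$. This covers part of the range $\min_i\alpha_i\le2$ that the paper leaves open. The paper's route, in exchange, is a direct citation and also yields non-explosion for \eqref{adjustedSDE} started anywhere in $D$.

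One small slip: in your power-function variant the drift contributes $+\theta(x_i-\mu_i)/x_i^2$, not $-\theta(x_i-\mu_i)/x_i^2$. Your final coefficient $\frac{\theta}{\alpha_0}(2-\alpha_i)$ of $x_i^{-2}$ is nonetheless correct.
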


The proof is based on verifying the conditions for regularity of solutions developed in Section~3.4 of~\cite{K12};
in particular, we will use Corollary~3.1 therein. For ease of reference, we will state here as a theorem the
parts of that corollary that are needed here.

\begin{theorem}[Khasminskii; part of Corollary 3.1 from \cite{K12}]\label{thm:khasminskii-corollary3-1}

Let $b(t,x)\in\R^d$ and $\sigma(t,x)\in\R^{d\times n}$ be continuous and defined on $[t_0,\infty)\times D$, where
$D$ is an open subset of $\R^d$. Let $A(t,x)=(a_{ij}(t,x))=\sigma(t,x)\sigma(t,x)^T$.
Assume that $D=\bigcup_k D_k$, where $\{D_k\}$ is a non-decreasing sequence
of open sets, with the closure of each $D_k$ contained in $D$, and that for each $k=1,2,\dots$, the
following conditions hold:
\begin{equation*}
|b(t,x)-b(t,y)|+\sum_{i,j}|\sigma_{ij}(t,x)-\sigma_{ij}(t,y)|\leq M_k |x-y| \\
\tag{Condition 1}
\end{equation*}
\begin{equation*}
|b(t,x)|+\sum_{i,j}|\sigma_{ij}(t,x)|\leq M_k\left(1+|x|\right),
\tag{Condition 2}
\end{equation*}
where the bound $M_k>0$ depends only on $k$. Assume also that there exists a function
$V(t,x)$ defined on $[t_0,\infty)\times D$ that is continuously differentiable w.r.t.\ $t$ and twice continuously
differentiable w.r.t.\ $x$, satisfying the conditions:
\begin{equation*}
LV \le cV,
\tag{Condition 3}
\end{equation*}
for some constant $c>0$, where $L$ is the differential operator defined by
$$
LV(t,x) = \frac{\partial V(t,x)}{\partial t} + 
\sum_i b_i(t,x)\frac{\partial V(t,x)}{\partial x_i} +
\frac{1}{2} \sum_{ij}a_{ij}(t,x)\frac{\partial^2 V(t,x)}{\partial x_i \partial x_j};
$$
and
\begin{equation*}
\inf_{t>0,x \in D\setminus D_k} V(t,x) \to \infty \hspace{20pt} as \hspace{5pt} k \to \infty.
\tag{Condition 4}
\end{equation*}
Then, given an instance $W(t)$ of standard $d$-dimensional Brownian motion, for every random
variable $X(t_0)$ with values almost surely in $D$, independent from $W(t)$, there exists a
strong solution $X(t)$, $t\geq t_0$, for the equation
$$
dX(t) = b(t,X)dt + \sigma(t,X)dW(t)
$$
that is Markovian, almost surely continuous, and satisfies
$$
P[X(t)\in D \textrm{ for all $t\geq t_0$}]=1.
$$ \qed
\end{theorem}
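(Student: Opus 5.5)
The statement above is quoted from \cite{K12} and is used as a black box in the proof of Theorem~\ref{thm:noExplosionAdjusted}. If one wanted to reprove it, I would use the classical localization plus Lyapunov-function argument, in the same spirit as the sketch given for the existence of strong solutions up to an explosion time.

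\emph{Step 1: strong solution up to an exit time.} For each $k$, I would extend the restrictions of $b$ and $\sigma$ to $[t_0,\infty)\times\bar D_k$ to all of $[t_0,\infty)\times\R^d$. The extension should keep a global Lipschitz constant and linear growth, which Conditions~1 and~2 allow; one can extend componentwise via the McShane formula and then truncate. The classical theorem (\cite{KS91}, Theorem~5.2.9) then gives a unique strong solution $X^{(k)}$ for each $k$. Let $\tau_k$ be the first exit time of $X^{(k)}$ from $D_k$. Pathwise uniqueness shows that $X^{(k)}=X^{(k')}$ on $[t_0,\tau_k)$ for $k\le k'$, so $\tau_k$ is non-decreasing in $k$. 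Hence $X(t):=X^{(k)}(t)$ for $t<\tau_k$ defines a solution up to $\tau=\lim_k\tau_k$. The Markov property and continuity follow from strong uniqueness in the standard way.

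\emph{Step 2: control via the Lyapunov function.} First I would normalize $V$ so that $V\ge 0$. Adding a constant $C\ge0$ does not change $LV$, and since $c>0$ we get $LV\le cV\le c(V+C)$. This requires $V$ to be bounded below, as is implicit in Khasminskii's setting; Condition~4 already gives a lower bound outside some $D_k$. Next I would localize the initial value by working on the event $\Omega_{m,N}=\{X(t_0)\in D_m,\ V(t_0,X(t_0))\le N\}$, which is $\F_{t_0}$-measurable. Applying It\^o's formula to $e^{-c(t-t_0)}V(t,X(t))$, stopped at $\tau_k\wedge t$, gives a drift term $e^{-c(t-t_0)}(LV-cV)\le0$. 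The stochastic integral is a true martingale up to $\tau_k$ because the coefficients are bounded on $\bar D_k$. Therefore
\[
E\bigl[\mathbf 1_{\Omega_{m,N}}e^{-c(\tau_k\wedge t-t_0)}V(\tau_k\wedge t,X(\tau_k\wedge t))\bigr]\le N .
\]

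\emph{Step 3: conclusion.} On the event $\{\tau_k\le t\}$ we have $X(\tau_k)\in\partial D_k\subset D\setminus D_{k-1}$. Combining this with the inequality from Step~2 yields
\[
P(\Omega_{m,N}\cap\{\tau_k\le t\})\inf_{s,\,x\in D\setminus D_{k-1}}V(s,x)\le N e^{c(t-t_0)} .
\]
By Condition~4, letting $k\to\infty$ gives $P(\Omega_{m,N}\cap\{\tau\le t\})=0$. Then letting $m,N,t\to\infty$, and using $X(t_0)\in D$ a.s., gives $\tau=\infty$ a.s. and $X(t)\in D$ for all $t$.

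The main obstacle is Step~1, namely producing extensions that are simultaneously globally Lipschitz and of linear growth while agreeing with the original coefficients on $\bar D_k$. The rest is the routine supermartingale estimate.
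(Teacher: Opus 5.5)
Your proposal is correct and follows the standard localization-plus-Lyapunov-supermartingale argument from Khasminskii's Section~3.4, which the paper quotes without proof; the paper's own sketch in the strong-existence proposition is your Step~1, done there with smooth cutoffs rather than a McShane extension. Your caveat about a lower bound on $V$ is right: Khasminskii assumes $V\ge 0$, the paper's transcription omits this, and the $V$ used in Theorem~\ref{thm:noExplosionAdjusted} satisfies it. Also, since $\bar D_k$ need not be compact in general, replace the ``true martingale'' claim in Step~2 by Fatou's lemma applied to the nonnegative local supermartingale $\mathbf 1_{\Omega_{m,N}}e^{-c(t\wedge\tau_k-t_0)}V(t\wedge\tau_k,X(t\wedge\tau_k))$.
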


\begin{proof}[Proof of Theorem~\ref{thm:noExplosionAdjusted}]
Let 
$q=\sqrt{\min(\alpha_1,\dots,\alpha_d)/2}$;
by hypothesis, $q>1$. 
Consider the open set $D=\{x\in\R^d:  x_1,\dots,x_d>0 , S(x)<q\}$. 
Define a sequence of open sets 
$D_k:\{x=(x_1,\cdots, x_{n}): \text{each} \hspace{5pt} x_i>\frac{1}{k} \hspace{5pt} \text{and}\hspace{5pt} S(x) <q-\frac{1}{k}\}$,
$k=2,3,\cdots$. Then $\bigcup D_k =D$, and the closure of each $D_k$ is contained in $D$. 
 We have that $D\setminus D_k=\{x=(x_1,\cdots, x_{n}): \text{any} \hspace{5pt} 0 < x_i \le \frac{1}{k} \hspace{5pt} \text{or}\hspace{5pt} q-\frac{1}{k}\le S(x)<q\}$.

We will show that on  
$(0, \infty) \times D_k$, for some constant $M_k$, the coefficients $\mu=-\theta(x-\frac{\alpha}{\alpha_0}),$ and 
$\sigma=\sqrt{\frac{2\theta}{\alpha_0}} \diag (\sqrt{x})\Big[S(x)I-\sqrt{x}\sqrt{x}^T \Big],$ satisfy
the Conditions~1--4 in the hypothesis of Theorem~\ref{thm:khasminskii-corollary3-1}.

We first check Condition~1.
For $x,y \in D_k$, $i,j=1,\cdots,d$ we have:
\[
\left|b(x)-b(y)\right|=
\left|-\theta\left(x-\frac{\alpha}{\alpha_0}\right)+\theta\left(y-\frac{\alpha}{\alpha_0}\right)\right|= \theta |x-y|.
\]
If $i \ne j$:
\begin{align*} 
|\sigma_{ij}(x)-\sigma_{ij}(y)| &= \sqrt{\frac{2\theta}{\alpha_0}}\Big|x_i\sqrt{x_j}-y_i\sqrt{y_j}\Big|\\ 
&\le \sqrt{\frac{2\theta}{\alpha_0}} \left( \Big|x_j\sqrt{x_i}-x_j\sqrt{y_i}\Big|+ \Big|x_i\sqrt{y_j} -y_i\sqrt{y_i}\Big| \right)\\
&\leq\sqrt{\frac{2\theta}{\alpha_0}} \left( q\frac{|x_j-y_j|}{\sqrt{x_i}+\sqrt{y_i}} +  \sqrt{q} \Big| x_i-y_i \Big| \right)
\end{align*} 

If $i=j$:
\begin{align*} 
|\sigma_{ii}(x)-\sigma_{ii}(y)| &= 
\sqrt{\frac{2\theta}{\alpha_0}} \Big|\sqrt{x_i}\left(S(x)-x_i\right)-\sqrt{y_i}\left(S(y)-y_i\right)\Big|  \\
&=\sqrt{\frac{2\theta}{\alpha_0}}\Big| \sqrt{x_i}\left( S(x)-x_i-S(y)+y_i  \right)  +
                   (\sqrt{x_i}-\sqrt{y_i})(S(y)-y_i) \Big|  \\
&\le \sqrt{\frac{2\theta}{\alpha_0}}  \left(\sqrt{q} \sum_{r\neq i}|x_r-y_r| + q\frac{| x_i-y_i |}{ \sqrt{x_i}+\sqrt{y_i}} \right) \end{align*} 
Combining these three calculations verifies that Condition~1 holds; the constant $M_k$ depends on $k$ because
we use the inequality $\sqrt{x_i}+\sqrt{y_i} > \frac{2}{\sqrt{k}}$.

For Condition~2, we observe that
\[
|b(x)|=|-\theta\left(x-\alpha/\alpha_0\right)| \leq \theta \left(|x|+|\alpha|/\alpha_0\right) \leq c(1+|x|),
\]
\[
|\sigma_{ij}(x)| = \sqrt{\frac{2\theta}{\alpha_0}} x_i\sqrt{x_j}   \le 
\sqrt{\frac{2\theta q}{\alpha_0}}  x_i \le 
\sqrt{\frac{2\theta q}{\alpha_0}}(1+|x|),
\]
if $i\neq j$, and
\[
|\sigma_{ii}(x)| = 
\sqrt{\frac{2\theta}{\alpha_0}} |\sqrt{x_i}\left(S(x)-x_i\right)| \le \sqrt{\frac{2\theta q}{\alpha_0}}  \left( S(x)+|x_i|\right) <  \sqrt{\frac{2\theta q}{\alpha_0}}\left(q+|x_i|\right).
\]
This verifies that Condition 2 holds (the constant $M$ does not even depend on $k$ in this case).

To verify Conditions~3 and~4, define the function
$$ V(x)=\sum_{i=1}^d\frac{1}{x_i} + \frac{1}{q-S(x)}.$$
For ease of computation, define $V_1(x)=\sum_{i=1}^d\frac{1}{x_i}$ and $V_2(x)=\frac{1}{q-S(x)}$, so
that $V=V_1+V_2$.

First, we have that:
\begin{eqnarray*} A(x)&=&\sigma(x)\sigma(x)^T= \frac{2\theta}{\alpha_0} \diag(\sqrt{x})\Big[S(x)I-\sqrt{x}\sqrt{x}^T \Big] \Big[S(x)I-\sqrt{x}\sqrt{x}^T \Big] \diag(\sqrt{x})\\
&=& \frac{2\theta}{\alpha_0} \Big[ S(x)\diag(\sqrt{x}) -x\sqrt{x}^T\Big]\Big[ S(x)\diag(\sqrt{x})-\sqrt{x}x^T\Big]\\
&=&  \frac{2\theta}{\alpha_0} \Big[ S^2(x)\diag(x)- S(x)xx^T-S(x)xx^T+x\sqrt{x}^T\sqrt{x}x^T  \Big]\\
&=&  \frac{2\theta}{\alpha_0} \Big[ S^2(x)\diag(x)- S(x)xx^T-S(x)xx^T+xS(x)x^T  \Big]= \frac{2\theta}{\alpha_0} \Big[ S^2(x)\diag(x)- S(x)xx^T \Big],\end{eqnarray*} 
and rewrite the above $V(x)$ as $ V(x)=V_1(x)+V_2(x)$
where $V_1(x)=\sum_{i=1}^d V_{1,i}(x) =  \sum_{i=1}^d \frac{1}{x_i}$ and $V_2(x)=\frac{1}{q-S(x)}.$ We will deal with $V_1(x)$ and $V_2(x)$ separately. 

$V_1(x)$ has the derivatives as following:
$$ \frac{\partial V_1}{\partial x_i}=-\frac{1}{x_i^2}, \hspace{30pt}  \frac{\partial^2 V_1}{\partial x_i \partial x_j}=\frac{2}{x_i^3} \cdot \delta_{i,j},$$
and applying  the operator $L$ to $V_1(x)$, we have 
\begin{eqnarray*}  
LV_1(x)&=& \sum_{i=1}^d (-\theta)(x_i-\frac{\alpha_i}{\alpha_0})(-\frac{1}{x_i^2}) +\frac{1}{2} \sum_{i=1}^d \frac{2\theta}{\alpha_0} \Big[S(x)x_i(S(x)-x_i)\Big]\frac{2}{x_i^3} \\
&=& \sum_{i=1}^d \frac{\theta}{x_i} -\theta\sum_{i=1}^d \frac{\alpha_i}{\alpha_0 x_i^2}+\frac{2\theta}{\alpha_0}S(x) \cdot  \sum_{i=1}^d\frac{S(x)-x_i}{x_i^2} \\
&=& \sum_{i=1}^d \frac{\theta}{x_i} -\theta\sum_{i=1}^d \frac{\alpha_i}{\alpha_0 x_i^2}+\frac{2\theta}{\alpha_0}S^2(x) \cdot \sum_{i=1}^d\frac{1}{x_i^2} -\frac{2\theta}{\alpha_0}S(x) \cdot \sum_{i=1}^d \frac{1}{x_i}\\
&=&  \sum_{i=1}^d \frac{\theta}{\alpha_0} \Big[\Big(\alpha_0-2S(x)\Big) \frac{1}{x_i}  + \Big(2S^2(x)-\alpha_i\Big) \frac{1}{x_i^2} \Big]
\triangleq  \sum_{i=1}^d V_{1}^L(x_i).
\end{eqnarray*}  

To check if $\frac{LV_1}{V_1}$ has a finite upper bound, it is sufficient to check if $\frac{V_1^L(x_i) }{V_{1,i}(x_i)}$ is upper bounded for each $i$:
Indeed,
\begin{eqnarray*}  
\frac{V_1^L(x_i) }{V_{1,i}(x_i)} &=& \frac{ \frac{\theta}{\alpha_0} \Big[\Big(\alpha_0-2S(x)\Big) \frac{1}{x_i}  + \Big(2S^2(x)-\alpha_i\Big) \frac{1}{x_i^2} \Big] }{1/x_i}\\
&=& \frac{\theta}{\alpha_0} \Big[\Big(\alpha_0-2S(x)\Big) + \Big(2S^2(x)-\alpha_i\Big) \frac{1}{x_i} \Big]\le\theta + \frac{\theta}{\alpha_0} \Big(2S^2(x)-\alpha_i\Big) \frac{1}{x_i} 
\le\theta.
\end{eqnarray*}  
The last step holds because $2S^2(x)-\alpha_i < 0$ on $D$. Therefore, $V_1^L(x_i) \le \theta V_{1,i}(x_i)$ which implies 
$$LV_1(x) \le \theta  V_1(x).$$

Now we check the conditions for $V_2(x)$. The derivatives are:
$$ \frac{\partial V_2}{\partial x_i}=\frac{1}{(q-S(x))^2}, \hspace{30pt}  \frac{\partial^2 V_2}{\partial x_i \partial x_j}=\frac{2}{(q-S(x))^3},$$
and applying the operator $L$ to $V_2(x)$ we have 
\begin{eqnarray*}  
LV_2(x) &=&  \sum_{i=1}^d (-\theta)(x_i-\frac{\alpha_i}{\alpha_0}) \cdot \frac{1}{(q-S(x))^2} + \frac{1}{2} \sum_{i=1}^d\sum_{j=1,j\ne i}^d  \frac{2\theta}{\alpha_0}(-S(x)x_ix_j)\cdot \frac{2}{(q-S(x))^3}\\
& &+\frac{1}{2} \sum_{i=1}^d  \frac{2\theta}{\alpha_0} S(x)x_i(S(x)-x_i) \cdot \frac{2}{(q-S(x))^3}\\
&=& \sum_{i=1}^d \frac{-\theta(x_i-\frac{\alpha_i}{\alpha_0}) }{(q-S(x))^2} + \frac{2 \theta S(x)}{\alpha_0 (q-S(x))^3} \Big\{ -\sum_{i=1}^d\sum_{j=1,j\ne i}^d x_ix_j + \sum_{i=1}^d x_i(S(x)-x_i)\Big\},
\end{eqnarray*}  
The last term can be written as
\begin{eqnarray*}  
  -\sum_{i=1}^d\sum_{j=1,j\ne i}^d x_ix_j + \sum_{i=1}^d x_i(S(x)-x_i)&=&\sum_{i=1}^d \Big\{ x_i(S(x)-x_i) - x_i \sum_{j=1,j\ne i}^d x_j \Big\}\\
=  \sum_{i=1}^d \Big\{ x_iS(x)-x_i^2 - x_i \sum_{j=1,j\ne i}^d x_j \Big\}&=&  \sum_{i=1}^d \Big\{ x_iS(x)-x_iS(x)\Big\}
= 0.
\end{eqnarray*}  
Then
\begin{eqnarray*}  
LV_2(x) = \sum_{i=1}^d \frac{-\theta(x_i-\frac{\alpha_i}{\alpha_0}) }{(q-S(x))^2} =-\theta \cdot \frac{S(x)-1}{(q-S(x))^2}.
\end{eqnarray*}  
So 
$$
\frac{LV_2(x)}{V_2(x)}=-\theta \cdot \frac{S(x)-1}{q-S(x)}  ,
$$
which is less or equal to 0 if $S(x) \ge 1$. If $S(x) <1$,
$$
\frac{LV_2(x)}{V_2(x)}=-\theta \cdot \frac{S(x)-1}{q-S(x)}   \le -\theta \cdot \frac{S(x)-1}{1-S(x)}=\theta.
$$
Therefore, $$LV_2(x) \le \theta  V_2(x).$$

Thus,
$$LV(x)=LV_1(x)+LV_2(x) \le \theta \Big(V_1(x)+V_2(x)\Big)=\theta V(x).$$
Hence, Condition 3  is verified by $V$.

Now, for  Condition 4, we check that  
$$\inf_{t>0,x \in D\setminus D_k} V(x) = \inf_{t>0,x \in D\setminus D_k} \Big \{\sum_{i=1}^d \frac{1}{x_i}+\frac{1}{q-S(x)} \Big\}
 \to \infty \hspace{20pt} as \hspace{5pt} k \to \infty, $$
on
$D\setminus D_k=\{x=(x_1,\cdots, x_{n}): \text{any} \hspace{5pt} 0 < x_i \le \frac{1}{k} \hspace{5pt} \text{or}\hspace{5pt} q-\frac{1}{k}\le S(x)<q\}$. 
Indeed, when $k \to \infty$, either at least one of the $\frac{1}{x_i} \to \infty$ or  $\frac{1}{q-S(x)} \to \infty$, so that  $\inf_{t>0,x \in D\setminus D_k} V(x) \to \infty$.
\end{proof}

\section{The invariant distribution}
Here we show that the DD process indeed has the prescribed Dirichlet distribution as its steady distribution.

\begin{theorem} Let $\alpha_1,\dots,\alpha_d>2$. Then Equation (\ref{canonicalSDE}) with initial condition
$X_1(0),\cdots,X_d(0) > 0$ and $ \sum_{i=1}^d X_i(0)=1,$ has a Dirichlet invariant measure, with parameter $\alpha$.
\end{theorem}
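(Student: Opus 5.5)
The plan is to work with the truncated formulation \eqref{truncatedSDE} on the open set $\s^-\subset\R^{d-1}$ and show that the Dirichlet density $f$ of \eqref{eq:DirichletDensity} is a stationary solution of the Fokker--Planck equation. By Theorem~\ref{thm:noExplosionAdjusted} and Proposition~\ref{prop:solAdjustCanon}, for $\alpha_1,\dots,\alpha_d>2$ the solution of \eqref{canonicalSDE} started in $\s$ stays in $\s$ for all time. By part (d) of Theorem~\ref{thm:existenceweaksolutions}, $X^-$ then solves \eqref{truncatedSDE} and stays in $\s^-$. So it suffices to show that the law with density $f$ on $\s^-$ is invariant for $X^-$.

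\textbf{Step 1: the generator.} From \eqref{truncatedSDE}, with $y=x^-\in\s^-$, the drift is $b_i(y)=-\theta(y_i-\alpha_i/\alpha_0)$. The diffusion matrix $a=\sigma\sigma^T$ is computed as in the proof of Theorem~\ref{thm:noExplosionAdjusted}, now with $S=1$. Using $\sqrt{x^+}^T\sqrt{x^+}=1$ and $I_d^-(I_d^-)^T=I_{d-1}$, this gives
\begin{equation*}
a_{ij}(y)=\frac{2\theta}{\alpha_0}\left(y_i\delta_{ij}-y_iy_j\right),\qquad i,j=1,\dots,d-1 .
\end{equation*}
The generator is therefore $L g=\sum_i b_i\,\partial_i g+\frac12\sum_{ij}a_{ij}\,\partial_{ij}g$, which is a Wright--Fisher type operator.

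\textbf{Step 2: the stationary Fokker--Planck identity.} We verify that $L^*f=0$ on $\s^-$, where
\begin{equation*}
L^*f=-\sum_i\partial_i(b_if)+\frac12\sum_{ij}\partial_{ij}(a_{ij}f).
\end{equation*}
The efficient way is to check the stronger zero-flux condition: for each $i$,
\begin{equation*}
J_i:=-b_if+\frac12\sum_j\partial_j(a_{ij}f)=0 .
\end{equation*}
For this we use $\partial_j f=f\left((\alpha_j-1)/y_j-(\alpha_d-1)/y_d\right)$ with $y_d=1-\sum_{k<d}y_k$, together with $\sum_j\partial_j a_{ij}=\frac{\theta}{\alpha_0}\left(2-2y_i-2(d-1)y_i\right)$ (up to the constant factor). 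Expanding, $\frac12\sum_j a_{ij}\partial_j f$ collapses to $\frac{\theta}{\alpha_0}f\left((\alpha_i-1)-y_i\sum_{k\le d}(\alpha_k-1)\right)$. Adding the divergence term and $-b_if$ should give zero identically; this is routine algebra.

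\textbf{Step 3: from the PDE to invariance.} For $g\in C_c^\infty(\s^-)$, integration by parts gives $\int_{\s^-}Lg\,f\,dy=\int g\,L^*f\,dy=0$, with no boundary terms. To pass to invariance, take $X^-(0)\sim f$. For $g\in C^2$ on a neighbourhood of $\bar\s^-$, Itô's formula gives $E\,g(X^-(t))=E\,g(X^-(0))+\int_0^tE\,Lg(X^-(s))\,ds$. We then need to show that $\int Lg\,f=0$ holds for all $g\in C^2(\bar\s^-)$, not just compactly supported ones. This requires the boundary flux terms $g\,J\cdot n$ and $a\nabla g\cdot n\,f$ to vanish on $\partial\s^-$. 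They do: $a\cdot n$ vanishes on each face, and $f$ vanishes there since $\alpha_i>2>1$.

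The remaining issue is that the one-dimensional marginal law $\nu_t$ of $X^-(t)$ is not a priori known to equal $f$. I would handle this via uniqueness of solutions to the forward equation. Define $\nu_t$ as the law of $X^-(t)$ and $\tilde\nu_t:=f\,dy$ for all $t$. Both satisfy $\frac{d}{dt}\int g\,d\nu_t=\int Lg\,d\nu_t$ for $g\in C^2(\bar\s^-)$, with the same initial value. Uniqueness in distribution from Theorem~\ref{thm:existenceweaksolutions} implies well-posedness of the martingale problem, hence uniqueness for this forward equation (Echeverría/Stroock--Varadhan type argument). Alternatively, one can use the Markov semigroup $P_t$ and show $\frac{d}{dt}\int P_tg\,f=\int LP_tg\,f=0$; this needs $P_tg\in C^2$, which is the delicate regularity point.

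I expect Step 3 to be the main obstacle, specifically justifying the passage from $L^*f=0$ to genuine invariance near the degenerate boundary. Step 2 is a direct computation. My first attempt would be the martingale-problem route: it only uses the uniqueness in distribution already established in Theorem~\ref{thm:existenceweaksolutions} together with the vanishing-boundary-flux estimate, which holds comfortably under $\alpha_i>2$.
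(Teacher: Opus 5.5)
Your proposal is correct. It shares the paper's skeleton: reduce to \eqref{truncatedSDE}, use $a=\frac{2\theta}{\alpha_0}(\diag(y)-yy^T)$, and show that the Dirichlet density is stationary for the Fokker--Planck equation. Two ingredients differ.

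First, the paper checks \eqref{F-Pclaim} by computing all second derivatives of $g$ and matching the $i$-th summands on each side, which amounts to verifying $\partial_i J_i=0$ for each $i$. You verify the first-order identity $J_i\equiv 0$ itself. It does hold: with $c=\theta/\alpha_0$,
\[
J_i=cf\,\bigl[(\alpha_0y_i-\alpha_i)+(1-d\,y_i)+(\alpha_i-1)-y_i(\alpha_0-d)\bigr]=0 .
\]
This costs one derivative less and yields more, since zero flux is detailed balance, so the Dirichlet law is even reversible for the process.

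Second, and more importantly, the paper passes from the PDE to invariance by quoting a converse Fokker--Planck statement from \cite{S94,S88} under unspecified ``appropriate'' conditions. It never examines $\partial\s^-$, where the generator degenerates; in general a positive normalized solution of the stationary equation need not be invariant unless the boundary flux vanishes. Your Step~3 supplies exactly this missing justification: $f\,a\nabla g\cdot n$ vanishes on every face, and then Echeverr\'{\i}a's theorem, or forward-equation uniqueness combined with uniqueness in law, finishes the argument.

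The price of your Step~3 is heavy martingale-problem machinery. It also relies on the uniqueness in distribution asserted in Theorem~\ref{thm:existenceweaksolutions}, whose appendix argument rests on Skorohod's theorem, and that theorem gives existence only. You can avoid both. Since $b$ is affine and $a$ is quadratic, $L$ maps polynomials of degree at most $n$ into themselves. Hence, for any solution with values in $\bar{\s}^-$, the moments of order at most $n$ of $X^-(t)$ satisfy a closed finite-dimensional linear ODE, by It\^o's formula with bounded integrands. Your identity $\int Lp\,f\,dy=0$ for polynomials $p$ makes the Dirichlet moments a fixed point of that ODE. On the compact set $\bar{\s}^-$ moments determine the law, so this gives invariance with no regularity of $P_t$ and no uniqueness theorem.
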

\begin{proof}
By Theorem~\ref{thm:existenceweaksolutions}, it is enough to prove that  the solution to Equation~\eqref{truncatedSDE} has Dirichlet invariant distribution with parameter $\alpha$. 

Equation~\eqref{truncatedSDE} has the following coefficient functions: 

\begin{eqnarray*} 
\mu(x) &=& -\theta(x-\frac{\alpha}{\alpha_0}),\\
\sigma(x)&=& \sqrt{\frac{2\theta}{\alpha_0} }\cdot \begin{bmatrix}
    \sqrt{x_1}(1-x_1) & -x_1 \sqrt{x_2} & \dots  & -x_1\sqrt{x_{d-1}} & -x_1\sqrt{1-\sum_{i=1}^{d-1}x_i} \\
     -x_2 \sqrt{x_1} & \sqrt{x_2}(1-x_2) & \dots  & -x_2 \sqrt{x_{d-1}}   & -x_2\sqrt{1-\sum_{i=1}^{d-1}x_i} \\
    \vdots & \vdots &  \ddots & \vdots  & \vdots\\
   -x_{d-1} \sqrt{x_1} &  -x_{d-1}\sqrt{x_2} & \dots  & \sqrt{x_{d-1}}(1-x_{d-1}) &  -x_{d-1}\sqrt{1-\sum_{i=1}^{d-1}x_i}
\end{bmatrix}.\\
a(x)&=&\sigma(x)\sigma^T(x) =\frac{2\theta}{\alpha_0} \cdot
\begin{bmatrix}
    x_1(1-x_1) & -x_1 x_2 & \dots  & -x_1x_{d-1} \\
     -x_2 x_1 & x_2(1-x_2) & \dots  & -x_2 x_{d-1}   \\
    \vdots & \vdots &  \ddots & \vdots \\
   -x_{d-1}x_1 &  -x_{d-1}x_2 & \dots  & x_{d-1}(1-x_{d-1})
\end{bmatrix}.
\end{eqnarray*}

We make use of the following theorem:
\begin{theorem}[Fokker--Planck/Forward Kolmogorov equation for the steady distribution; see, e.g.,  \cite{S94,S88}] \label{FPE}
Assume the coefficients $\mu$ and $\sigma$ are independent of time $t$, smooth enough to make desired
derivatives exist and continuous, and the invariant measure has a density, then the steady state probability density
function $f(x)$  verifies the  following partial differential equation: 
\begin{equation} \label{fp-equation}
\sum_{i=1}^d \frac{\partial}{\partial x_i} [\mu_i(x)f(x)]=\frac{1}{2} \sum_{i=1}^d \sum_{j=1}^d \frac{\partial^2}{\partial x_i \partial x_j} [a_{ij}(x)f(x)],
\end{equation}
where $a=\sigma \sigma^T.$

Conversely, if a function $f$ is a solution of~\eqref{fp-equation}
satisfying the appropriate positivity and normalization conditions, then $f$ is the steady state density function. 
\end{theorem}

Consider the function 
$g(x_1,\dots,x_{d-1},\alpha)=\left[\prod_{i=1}^{d-1} x_i^{\alpha_i-1}\right] \cdot x_d^{\alpha_d-1}$,
where $x_d$ denotes $1-\sum_{i=1}^{d-1}x_i$. This is an unnormalized version of the Dirichlet density with
parameter $\alpha$ in Equation~\eqref{eq:DirichletDensity}. We just have to verify that $g$ satisfies
Equation~\eqref{fp-equation}, but in $d-1$ dimensions. That is, we want to prove:
$$\sum_{i=1}^{d-1} \frac{\partial}{\partial x_i} [\mu_i(x)g(x)]=\frac{1}{2} \sum_{i=1}^{d-1} \sum_{j=1}^{d-1} \frac{\partial^2}{\partial x_i \partial x_j} [a_{ij}(x)g(x)].$$

So after substituting coefficient functions $\mu$ and $\sigma$ and canceling constant $\frac{\theta}{\alpha_0}$, we claim:
\begin{equation} \label{F-Pclaim}
-\sum_{i=1}^{d-1} \frac{\partial}{\partial x_i} [(\alpha_0x_i-\alpha_i)g]
=\sum_{i=1}^{d-1} \frac{\partial^2}{\partial x_i^2} [x_i(1-x_i)g]
-\sum_{i=1}^{d-1}\sum_{j=1,j \ne i}^{d-1}  \frac{\partial^2}{\partial x_i \partial x_j} [x_i x_j g].
\end{equation} 

We first compute the derivatives of $g$. For $i,j\in\{1,\dots,d-1\}$, denote
$g_i^{'}=\frac{\partial g }{\partial x_i}$, 
$g_{ii}^{''}=\frac{\partial^2 g }{\partial x_i^2}$, and $g_{ij}^{''}=\frac{\partial^2 g }{\partial x_i \partial x_j}$. Then:
\begin{eqnarray*} 
g_i^{'} &=& 
\left[
\prod_{i\neq i,d}x_i^{\alpha_i-1}
\right]
\left[
(\alpha_i-1)x_i^{\alpha_i-2}x_d^{\alpha_d-1}+(-1)(\alpha_d-1)x_i^{\alpha_i-1}x_d^{\alpha_d-2}
\right]\\
&=& g \cdot \Big[ \frac{\alpha_i-1}{x_i} -\frac{\alpha_d-1}{x_d}\Big],
\end{eqnarray*}
\begin{eqnarray*} 
g_{ii}^{''} &=& \frac{\partial}{\partial x_i} \Big[ g\cdot \frac{\alpha_i-1}{x_i} -g \cdot \frac{\alpha_d-1}{x_d}\Big]\\
&=& g \cdot \Big[\frac{(\alpha_i-1)(\alpha_i-2)}{x_i^2}-\frac{2(\alpha_i-1)(\alpha_d-1)}{x_ix_d} +\frac{(\alpha_d-1)(\alpha_d-2)}{x_d^2}    \Big],
\end{eqnarray*}
\begin{eqnarray*} 
g_{ij}^{''} &=& \frac{\partial}{\partial x_j} \Big[ g\cdot \frac{\alpha_i-1}{x_i} -g \cdot \frac{\alpha_d-1}{x_d}\Big]\\
&=& g \cdot \Big[\frac{(\alpha_i-1)(\alpha_j-1)}{x_ix_j}-\frac{(\alpha_i-1)(\alpha_d-1)}{x_ix_d}-\frac{(\alpha_j-1)(\alpha_d-1)}{x_jx_d} +\frac{(\alpha_d-1)(\alpha_d-2)}{x_d^2}\Big].
\end{eqnarray*}

Next we compute each term inside the summation of Equation (\ref{F-Pclaim}).
\begin{eqnarray*} 
 \frac{\partial [(\alpha_0x_i-\alpha_i)g]}{\partial x_i} &=& g \cdot \Big[\alpha_0\alpha_i  -\frac{\alpha_i(\alpha_i-1)}{x_i} -\frac{\alpha_0(\alpha_d-1)x_i -\alpha_i(\alpha_d-1)}{x_d}  \Big],
\end{eqnarray*}
\begin{eqnarray*} 
\frac{\partial^2  [x_i(1-x_i)g]}{\partial x_i^2}
&=& g \cdot \Big[-\alpha_i(\alpha_i+1)+ \frac{\alpha_i(\alpha_i-1)}{x_i} + \frac{2(\alpha_d-1)(\alpha_i+1)x_i -2\alpha_i(\alpha_d-1)}{x_d}  \\
& &+\frac{x_i(1-x_i)(\alpha_d-1)(\alpha_d-2)}{x_d^2} \Big],
\end{eqnarray*}

\begin{eqnarray*} 
 \frac{\partial^2  [x_i x_j g]}{\partial x_i \partial x_j} 
 &=&   g \cdot \Big[\alpha_i\alpha_j-\frac{\alpha_j x_i(\alpha_d-1)+\alpha_ix_j(\alpha_d-1)}{x_d}+\frac{(\alpha_d-1)(\alpha_d-2)x_ix_j}{x_d^2} \Big].
\end{eqnarray*}

Then 
\begin{eqnarray*} 
\sum_{j=1, j \ne i}^{d-1} \frac{\partial^2  [x_i x_j g]}{\partial x_i \partial x_j} &=& g \cdot \Big[\alpha_i (\alpha_0-\alpha_i-\alpha_d) -\frac{(\alpha_d-1) (\alpha_0-\alpha_i-\alpha_d)x_i}{x_d}\\
&&- \frac{\alpha_i(\alpha_d-1) (1-x_i-x_d)}{x_d}  +\frac{(\alpha_d-1)(\alpha_d-2)x_i(1-x_i-x_d)}{x_d^2} \Big].
\end{eqnarray*}

Therefore, the left hand side (LHS) of  the summation of Equation (\ref{F-Pclaim}) is:
\begin{eqnarray*} 
\text{LHS}=-\frac{\partial}{\partial x_i}\Big[(\alpha_0x_i-\alpha_i)g\Big]= g \cdot \Big[-\alpha_0\alpha_i +\frac{\alpha_i(\alpha_i-1)}{x_i} +\frac{\alpha_0(\alpha_d-1)x_i -\alpha_i(\alpha_d-1)}{x_d}  \Big].
\end{eqnarray*}
while the right hand side (RHS) is:
\begin{eqnarray*} 
\text{RHS} &=& \frac{\partial^2}{\partial x_i^2}\Big[x_i(1-x_i)g \Big] - \sum_{j=1,j \ne i}^{d-1} \frac{\partial^2}{\partial x_i \partial x_j}\Big[ x_i x_j g\Big]\\
&=& g \cdot \Big[-\alpha_0\alpha_i +\frac{\alpha_i(\alpha_i-1)}{x_i}  -\frac{\alpha_i(\alpha_d-1)}{x_d} +\frac{\alpha_0(\alpha_d-1)x_i}{x_d}\Big].
\end{eqnarray*}
Hence LHS=RHS, which implies $g(x,\alpha)$, or equivalently $f(x,\alpha)$, verifies   Equation~\eqref{fp-equation}. 
\end{proof}

\section{The Aggregation Property for the DD process}\label{sec:aggregation}
The DD process satisfies the a version of the aggregation property (see Section~\ref{sec:dirichletDistribution}),
in the following sense:
\begin{theorem}
Fix $\theta>0$ and $\alpha=(\alpha_1,\dots,\alpha_d)^T$, with $\alpha_1,\dots,\alpha_d>0$, and let
$\left\{X(t):t\geq0\right\}$ be a solution for SDE~(\ref{canonicalSDE}).
Let $\{B_1,B_2,\dots,B_k\}$ be a partition of the set $\{1,2,\dots,d\}$, and define the process $\{Y(t):t\geq0\}$
by $Y_i(t) = \sum_{j\in B_i}X_j(t)$ for $j=1,\dots,k$. Then 
$\left\{Y(t):t\geq0\right\}$ is a solution for SDE~(\ref{canonicalSDE})
in $k$ dimensions with parameters $\theta$ and 
$\alpha'=\left(\sum_{j\in B_1} \alpha_j,\sum_{j\in B_2} \alpha_j,\dots,\sum_{j\in B_k} \alpha_j\right)$.

\end{theorem}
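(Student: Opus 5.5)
The plan is to work with the partition matrix $B\in\R^{k\times d}$, where $B_{ij}=1$ if $j\in B_i$ and $0$ otherwise, so that $Y(t)=BX(t)$. Everything is linear, so I would not apply It\^o's formula in any nontrivial way; I would simply apply $B$ to \eqref{canonicalSDE}. Since $B\mu=\alpha'/\alpha_0$ and $\sum_i\alpha'_i=\alpha_0$, the drift becomes $-\theta(Y-\alpha'/\alpha'_0)\,dt$ with $\alpha'_0=\alpha_0$, exactly as required. The diffusion coefficient in front of $\sqrt{2\theta/\alpha_0}$ becomes
$$
B\diag(\sqrt{X})\left(I_d-\sqrt{X}\sqrt{X}^T\right)dW .
$$
Its $i$-th component is
$$
\sum_{j\in B_i}\sqrt{X_j}\,dW_j-Y_i\sum_{j=1}^d\sqrt{X_j}\,dW_j .
$$
This is not yet of the form $\diag(\sqrt Y)(I_k-\sqrt Y\sqrt Y^T)\,dW'$ with a $k$-dimensional Brownian motion $W'$. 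So $Y$ will be shown to be a (weak) solution with respect to a new Brownian motion built from $W$.

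The key step is an explicit choice of $W'$. On the event $Y_i(s)>0$ I would set
$$
dW'_i=\sum_{j\in B_i}\sqrt{X_j/Y_i}\,dW_j .
$$
When $Y_i(s)=0$ I would replace this by $dW'_i=d\widetilde W_i$, where $\widetilde W$ is an auxiliary independent $k$-dimensional Brownian motion obtained by enlarging the probability space if necessary. I would then check that $W'$ is a Brownian motion via L\'evy's characterization. Each $W'_i$ is a continuous local martingale. The integrand vectors $(\sqrt{X_j/Y_i})_{j\in B_i}$ have unit norm, because $\sum_{j\in B_i}X_j=Y_i$, and they have disjoint supports for different $i$. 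Hence $d\langle W'_i,W'_l\rangle=\delta_{il}\,dt$.

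Next I would verify the identity. We have $\sqrt{Y_i}\,dW'_i=\sum_{j\in B_i}\sqrt{X_j}\,dW_j$, and this also holds when $Y_i=0$, since then both sides vanish: $\sqrt{Y_i}=0$ and $X_j=0$ for $j\in B_i$. Summing over $i$ gives $\sqrt Y^T dW'=\sqrt X^T dW$. Therefore
$$
\left[\diag(\sqrt Y)(I_k-\sqrt Y\sqrt Y^T)\,dW'\right]_i=\sum_{j\in B_i}\sqrt{X_j}\,dW_j-Y_i\sqrt X^TdW ,
$$
which is exactly the $i$-th component computed in the first paragraph. So $\bigl((\Omega,\F,P),\{\F_t\},W',Y\bigr)$ satisfies \eqref{canonicalSDE} in dimension $k$ with parameters $\theta$ and $\alpha'$. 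The integrability condition in the definition of weak solution is immediate because all coefficients are bounded on $\bar\s$. The fact that $X(t)\in\bar\s$ comes from Theorem~\ref{thm:existenceweaksolutions}, and it gives $Y(t)\in\bar\s$ in $\R^k$.

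The main obstacle is the degenerate set where some $Y_i$ vanishes. Under $\min\alpha_j>2$, Theorem~\ref{thm:noExplosionAdjusted} rules this out and the auxiliary motion is never used. In general, though, one must check carefully that the patched integrand is progressively measurable and that the identity holds on that set. As a fallback, I would argue via the quadratic covariation instead: the martingale part of $Y$ has $d\langle Y\rangle=\frac{2\theta}{\alpha_0}B(\diag X-XX^T)B^T dt=\frac{2\theta}{\alpha_0}(\diag Y-YY^T)dt$, which equals $\sigma'\sigma'^T$ for the $k$-dimensional coefficient. I would then invoke the martingale representation theorem (e.g.\ \cite{KS91}, Prop.~5.4.6), again on an extended space, to produce $W'$.
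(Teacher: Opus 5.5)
Your proposal is correct and follows the paper's proof essentially step for step. Both use the same aggregation matrix, the same Brownian motion $W'_i=\sum_{j\in B_i}\int\sqrt{X_j/Y_i}\,dW_j$ checked by L\'evy's characterization, and the same identity $\sqrt{Y}^T dW'=\sqrt{X}^T dW$ to match the diffusion term. Your patching with an independent Brownian motion on $\{Y_i=0\}$ closes a gap the paper leaves open. There you use $X_j\ge 0$ from Theorem~\ref{thm:existenceweaksolutions} to see that both sides of $\sqrt{Y_i}\,dW'_i=\sum_{j\in B_i}\sqrt{X_j}\,dW_j$ vanish, whereas for general $\alpha$ the paper's integrand divides by $\sqrt{Y_i}$, which can be zero.
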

\begin{proof}
Assume that
$\left((\Omega,\F,P),\{\F_t\},W(t),X(t)\right)$
is a solution of SDE~\eqref{canonicalSDE}
and define $\{\tilde{W}(t)\}$ as a process in $\mathbb{R}^k$ with
components given by 
$$\tilde{W}_i(t) = \sum_{j\in B_i} \int_0^t\frac{ \sqrt{X_j(t)} }{\sqrt{\sum_{r\in B_i} X_r(t)}}\,dW_j(t), \quad i=1,\dots,k.$$
Since $\{X(t):t\geq0\}$ is a semimartingale adapted to $\{\F_t\}$, 
then $\{\tilde{W}(t):t\geq0\}=\{(\tilde{W}_1(t),\dots,\tilde{W}_k(t))^T:t\geq0\}$ is
a continuous local martingale. Using L\'evy's characterization of Brownian motion (see, e.g., \cite{KS91},
Theorem 3.16, pg.~157), we will prove now that $\{\tilde{W}(t):t\geq0\}$ is a standard Brownian
motion in $\R^k$ by checking that the quadratic cross-variation of $Y(t)$ equals $tI_k$.

Indeed, 
\begin{align*}
\left\langle \tilde{W}_i(t),\tilde{W}_i(t) \right\rangle_t & =    
\sum_{j\in B_i}\sum_{j'\in B_i}
\int_0^t
\left(
\frac{ \sqrt{X_j(t)} }{\sqrt{\sum_{r\in B_i} X_r(t)}}
\right)
\left(
\frac{ \sqrt{X_{j'}(t)} }{\sqrt{\sum_{r\in B_i} X_r(t)}}
\right)
\, d\left\langle W_j,W_{j'}  \right\rangle
\\
& =
\int_0^t  \sum_{j\in B_i} \left(  
       \frac{ \sqrt{X_j(t)} }{\sqrt{\sum_{r\in B_i} X_r(t)}}
   \right)^2\,dt  \qquad \textrm{(since $d\left\langle W_j,W_{j'}  \right\rangle=\delta_{jj'}\,dt$)}
   \\
& = \int_0^t \frac {\sum_{j\in B_i} X_j(t)}{\sum_{r\in B_i} X_r(t)} \,dt \\
& = t,
\end{align*}
for $i=1,\dots,k$. Also, if $i\neq i'$,
\begin{align*}
\left\langle \tilde{W}_i(t),\tilde{W}_{i'}(t) \right\rangle_t  & = 
\sum_{j\in B_i}\sum_{j'\in B_{i'}}
\int_0^t
\left(
\frac{ \sqrt{X_j(t)} }{\sqrt{\sum_{r\in B_i} X_r(t)}}
\right)
\left(
\frac{ \sqrt{X_{j'}(t)} }{\sqrt{\sum_{r\in B_i} X_r(t)}}
\right)
\, d\left\langle W_j,W_{j'}  \right\rangle
\\
& = 0,
\end{align*}
because $B_i\cap B_{i'}=\emptyset$, so $j$ and $j'$ can never be the same and therefore
$d\left\langle W_j,W_{j'}  \right\rangle=0$ always.

Now let $A$ be a $k\times d$ matrix defined by $a_{ij}=1$ if $j\in B_i$, and 0 otherwise. Then $AX(t) = Y(t)$,
$AdX(t) = dY(t)$, and
$A\alpha = \alpha'$. We will multiply both sides of SDE~(\ref{canonicalSDE}) by $A$ on the left and verify
that the right hand side has the same form as the original SDE.

First  observe that
\begin{equation}\label{chunk1}
A\left(-\theta\left( {X}- \frac{\alpha}{\alpha_0}\right)dt\right)=-\theta\left( {AX}- \frac{A\alpha}{\alpha_0}\right)dt =
-\theta\left( {Y}- \frac{\alpha'}{\alpha_0}\right)dt
\end{equation}
and  that $\alpha_0$ is the sum of the components of $\alpha'$.
Then notice that
\begin{align}
A\diag\left(\sqrt{X}\right)I_ddW &=
\left(\sum_{j\in B_1}\sqrt{X^j}dW^j,\dots,\sum_{j\in B_k}\sqrt{X^j}dW^j \right)^T \notag\\
&=\left(\sqrt{\sum_{j\in B_1}X^j}d\tilde{W}^1,\dots,\sqrt{\sum_{j\in B_k}X^j}d\tilde{W}^k\right)^T\notag\\
&=\diag\left(\sqrt{Y}\right)I_kd\tilde{W}\label{chunk2}
\end{align}
and that
\begin{align}
A\diag\left(\sqrt{X}\right)\sqrt{X} \sqrt{X}^TdW&=AX\sqrt{X}^TdW=Y\sqrt{X}^TdW=Y\sum_j\sqrt{X^j}dW^j\notag\\
&=Y\sum_i\sum_{j\in B_i}\sqrt{X^j}dW^j=Y\sqrt{Y}^T\,d\tilde{W}=\diag\left(\sqrt{Y}\right)\sqrt{Y} \sqrt{Y}^T\,d\tilde{W}\label{chunk3}
\end{align}
Adding the final expressions of ~(\ref{chunk1}), (\ref{chunk2}), and~(\ref{chunk3}), after multiplying the last 
two by $\sqrt{\frac{2\theta}{\alpha_0}}$, we obtain
an equation of the same form as~(\ref{canonicalSDE}).
\end{proof}

\section{Simulations}

\subsection{Numerical methods}

In this section we perform simulations of the DD process by computing numerical solutions of the SDE.
For simplicity, we use the Euler--Maruyama method (see, e.g., \cite{kloeden2011numerical}), which is
a recursive first order time discretization.
The method uses the following
ingredients: an initial value $X_0\in\s$, a fixed time step $\Delta t>0$, and Brownian motion increments
$\Delta W_0,\dots,\Delta W_{N-1}$ i.i.d. $\mathcal{N}(0,\sqrt{\Delta t})$ to generate values $X(t)=X(i\Delta t)=X_i$,
$i=1,\dots,N$, that is,
for $N+1$ equally spaced values of $t$ in $[0,N\Delta t]$, according to the following iterative scheme.

The explicit scheme is
\begin{equation}\label{eq:explicitscheme}
X_{n+1} = X_n -\theta(X_n-\mu)\Delta t + \sigma(X_n)\Delta W_n
\end{equation}
The implicit version of the scheme evaluates the drift term at $X_{n+1}$ rather than $X_n$, while still
evaluating the diffusion term at $X_n$.
We build a  family of partially implicit schemes, indexed by a parameter $0\leq\rho\leq1$:
\begin{equation*}
X_{n+1} = X_n + \rho\left[ -\theta(X_{n+1}-\mu)\Delta t\right]+
                    (1-\rho)\left[-\theta(X_{n}-\mu)\Delta t\right]+ \sigma(X_n)\Delta W_n
\end{equation*}
Solving for $X_{n+1}$ yields the following iteration:
\begin{equation}
X_{n+1} = \frac{1}{1+\theta\rho\Delta t}\left[(X_n(1-\theta(1-\rho)\Delta t)+\theta\mu\Delta t + \sigma(X_n)\Delta W_n\right]
\end{equation}
Setting $\rho=0$ corresponds to the explicit scheme \eqref{eq:explicitscheme}, while $\rho=1$ results
in the fully implicit version of the scheme. Choosing $0<\rho<1$ results in an intermediate combination of
both approaches.

If $X(0)\in\bar\s$, each iteration produces a vector with entries that add up to 1 (up to numerical error).
However, both schemes can exit the simplex $\bar\s$, no matter how small is $\Delta t$, by producing a vector
$X_n$ with one or more negative entries. Therefore, we include a ``reflecting'' step in each iteration, in which
each entry of $X_n$ is replaced with its absolute value; this necessitates the renormalization of $X_n$ so
that its components add up to 1.

As can be expected from our theoretical results, when $\min(\alpha_1,\dots,\alpha_d)>2$ the simulation
rarely crosses the boundaries of $\bar\s$. However, if one or more components of $\alpha$ are small,
the frequency of ``boundary hits'' increases; this can have effects on the performance of the simulation,
as will be discussed in Section~\ref{sec:convergence}.

The algorithm was implemented in the statistical computation language and environment
\emph{R}~\cite{Rmanual2024} in the form of a package, titled ``diridiff," which is available at
 the following URL: \url{https://github.com/odelacruzc/diridiff}.
The Brownian motion increments are obtained using \emph{R}'s default pseudo-random number generation
methods for the normal distribution.

\begin{figure}
\includegraphics[scale=0.75]{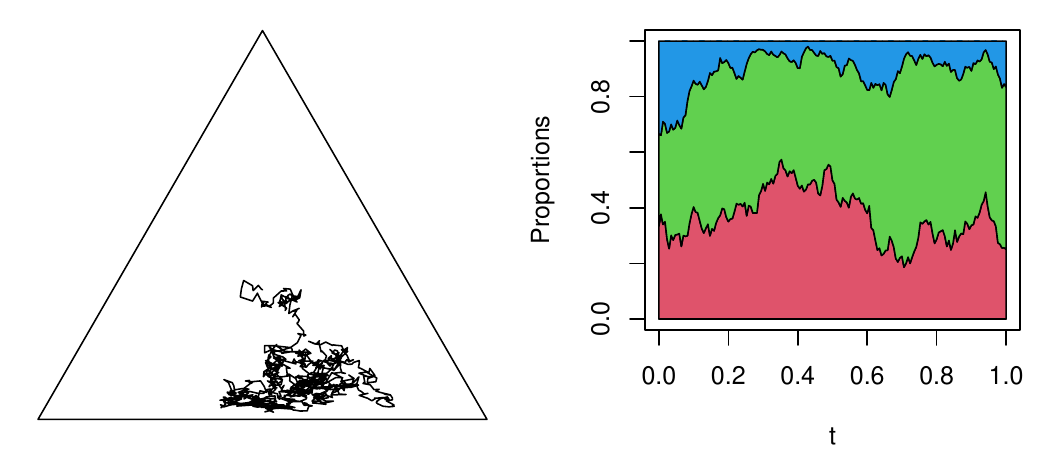}
\caption{A simulation run of the DD process in 3 components with parameters $\theta=1$ and
$\alpha=(1,1,1)$; the time range is $[0,1]$ and the starting point is $\mu=(1/3,1/3,1/3)$.
The explicit Euler--Maruyama scheme was
used with $\Delta t=10^{-3}$. The left panel presents the process in barycentric coordinates, while the
right panel shows the same process in a ``flag plot,'' with time on the $x$-axis and the components
stacked on the $y$-axis.
}\label{fig:TriangleAndFlagPlots}
\end{figure}

\subsection{The steady distribution}

Simulations show that the steady distribution of the DD process is indeed the appropriate
Dirichlet distribution. Figure~\ref{fig:fourTrianglePlots} presents four examples; each panel
contains the endpoints of $1,000$ independent runs of the DD process in 3 components,
plotted using barycentric coordinates.

\begin{figure}
\includegraphics[scale=0.4]{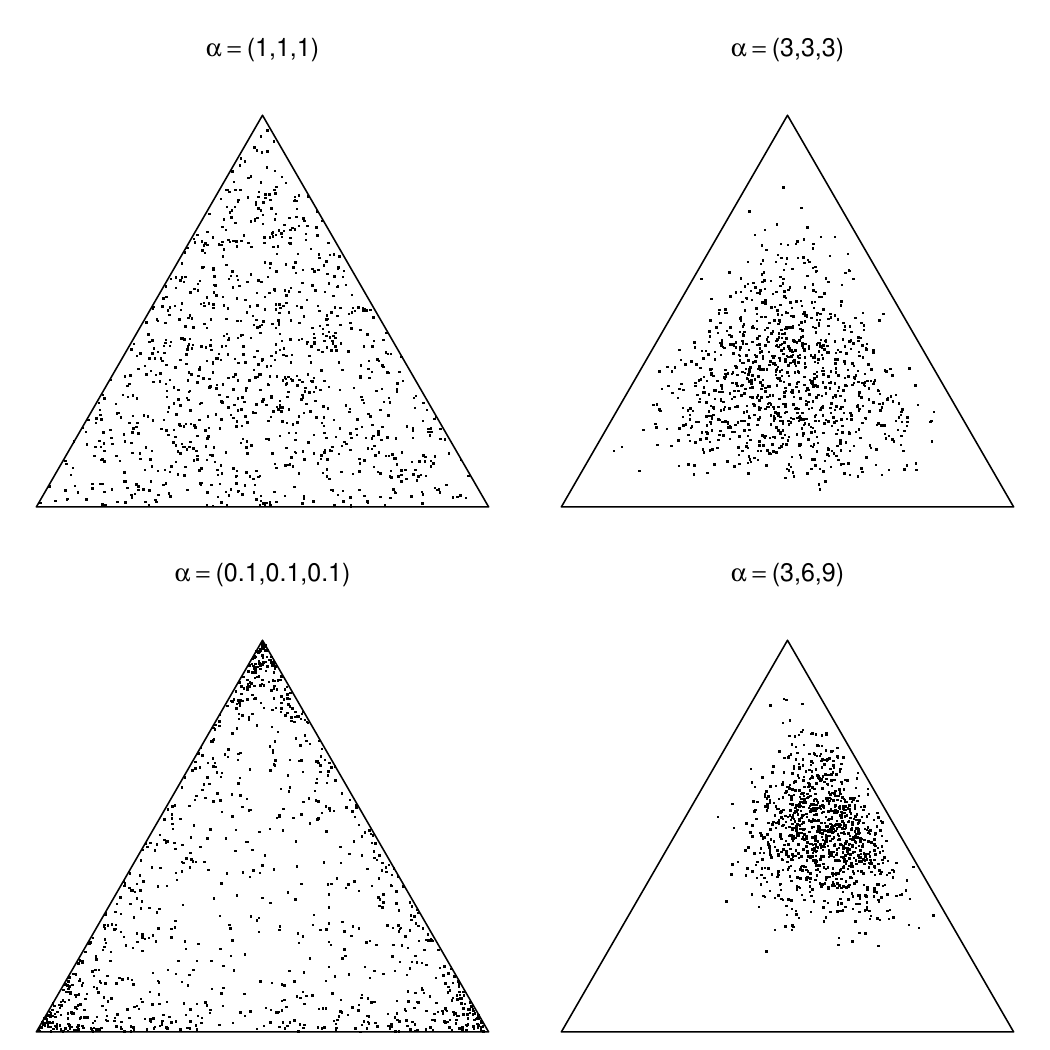}
\includegraphics[scale=0.4]{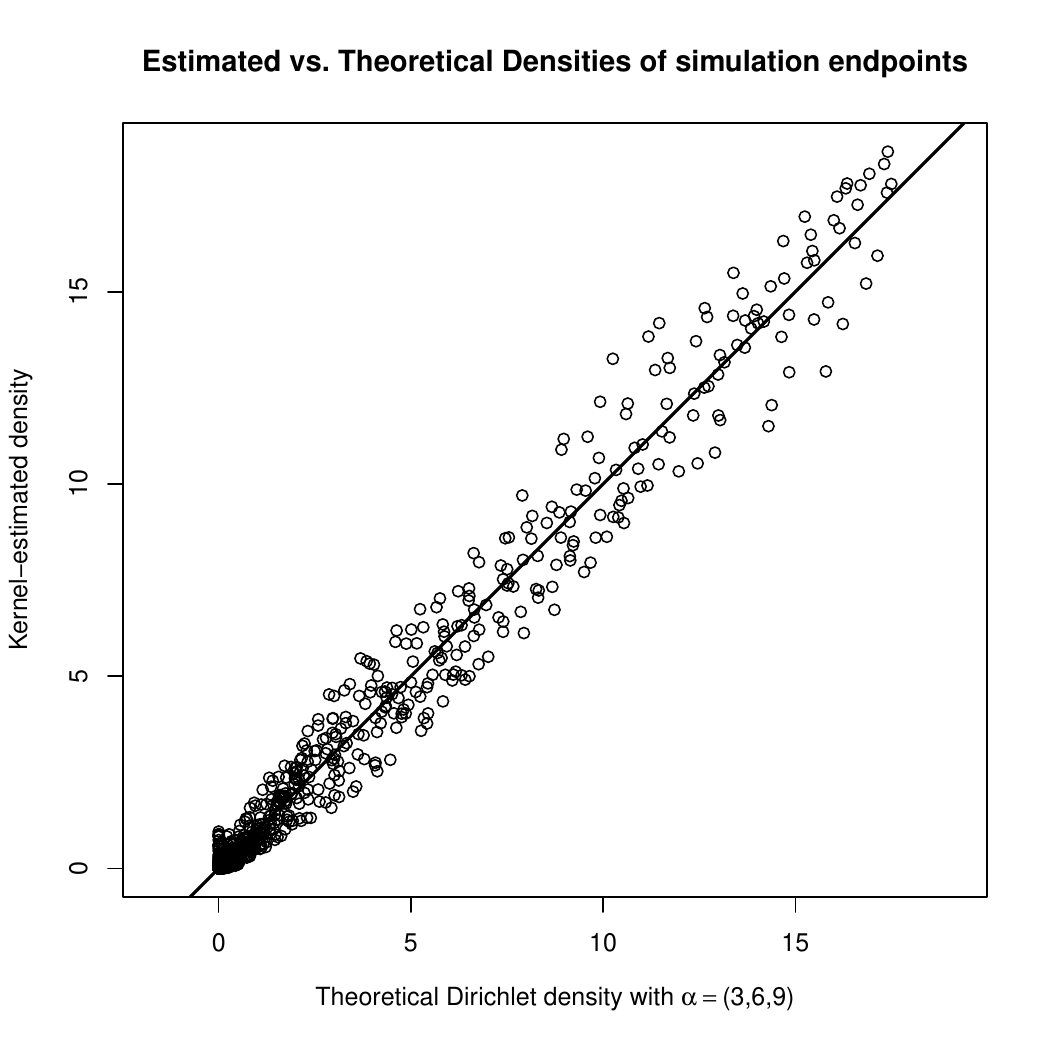}
\caption{Compositional time series data.
The plots shar in the plots.
}\label{fig:fourTrianglePlots}
\end{figure}

In low dimensions we can estimate the density for the simulation endpoints using, for example,
kernel methods. The panel on the right hand side of Figure~\ref{fig:fourTrianglePlots} shows a
comparison of the estimated density of the endpoints (computed using the algorithm \emph{kde2d}
proposed in \cite{VR-MASS2002} for 2-dimensional kernel density estimation) with the
corresponding density for the Dirichlet distribution with parameter $\alpha=(3,6,9)$.

It is difficult  in general to formally verify the fit of a multivariate distribution in high dimensions. In our
case, we can take advantage of the aggregation property in order to test multiple 1-dimensional
distributions. Figure~\ref{fig:nineQQandDensityPlots} displays the results of the following numerical
experiment: We generated 500 simulation runs of the DD process over the time interval $[0,10]$ with
parameters $\theta=1$ and  $\alpha=(1/2,1/2,\dots,1/2)\in\R^{10}$, using $\Delta t=10^{-4}$.
To investigate whether the resulting 500 endpoints in $\R^{10}$ follow the $\mathrm{Dirichlet}(\alpha)$
distribution, we add the first $k$ components; the resulting set of
numbers should follow the Beta distribution with parameters $k\cdot 1/2$ and $(10-k)\cdot 1/2$.
Figure~\ref{fig:nineQQandDensityPlots} contains the result of such comparisons, for $k=1,\dots,9$,
using QQ-plots and density plots (with empirical densities computed using the default \emph{R}
method).
Furthermore, Kolmogorov--Smirnov tests for each comparison result in very large $p$-values,
providing evidence that these univariate distributions match.  None of this is proof that the multivariate
distribution is correct, but it is highly suggestive.

\begin{figure}
\includegraphics[scale=0.4]{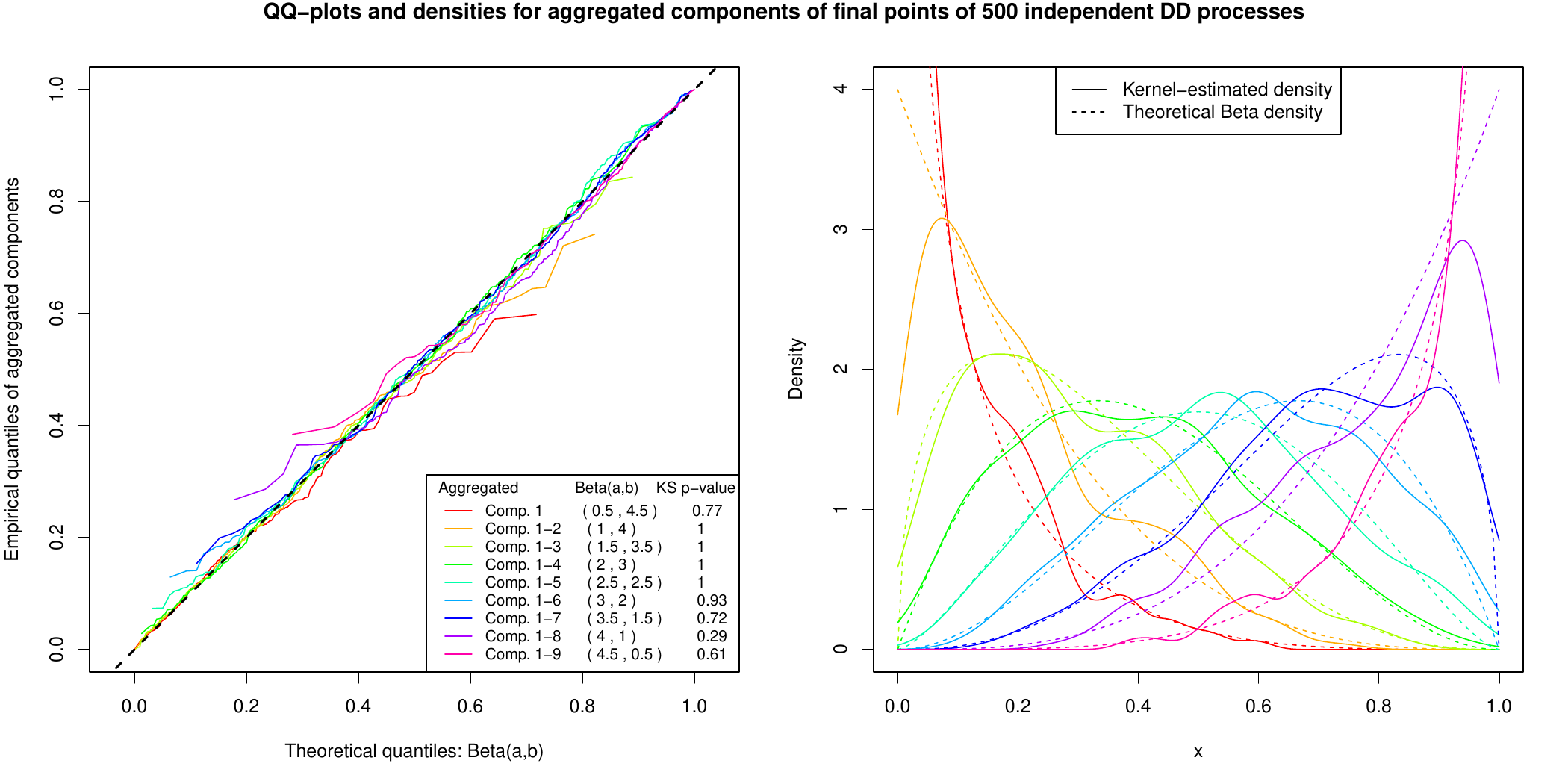}
\caption{
Results for 500 simulation runs of the DD process over the time interval $[0,10]$ with
parameters $\theta=1$ and  $\alpha=(1/2,1/2,\dots,1/2)\in\R^{10}$, using $\Delta t=10^{-4}$,
of which we keep only the endpoints.
For $k=1,\dots,9$, we add the first $k$ components, and we compare the distribution of the
resulting 500 numbers to the Beta distribution with parameters $k\cdot 1/2$ and $(10-k)\cdot 1/2$.
The panel on the left compares the quantiles (QQ-plot); the legend contains also the $p$-value
for the corresponding Kolmogorov--Smirnov (KS) test. The panel on the right displays estimated
density functions (computed using the default \emph{R} method) and the theoretical densities.
}\label{fig:nineQQandDensityPlots}
\end{figure}

\subsection{Convergence of solutions}\label{sec:convergence}

Consider a DD process $\left((\Omega,\F,P),\{\F_t\},W(t),X(t)\right)$. Denote by $X_{\Delta t}(t)$
the Euler--Maruyama approximation of $X(t)$ obtained using time step $\Delta t$ and using Brownian
increments $\Delta W_k = W(k\Delta x)-W((k-1)\Delta x)$ (interpolating linearly), over a time
interval $[0,T]$.
Theoretical results about the convergence of first order numerical schemes like Euler--Maruyama
(see, e.g., \cite{kloeden2011numerical}) state that
\begin{equation}\label{eq:convergenceOrder}
E\left(  \| X(T) - X_{\Delta t}(T) \| \right) \leq C(\Delta t)^\gamma,
\end{equation}
with order of convergence $\gamma = 1/2$,
\emph{under the assumption that the SDE's coefficients are globally Lipschitz}. As we have seen above,
that assumption does not hold in our case. However, it is not unreasonable to expect that when
$\min(\alpha_1,\dots,\alpha_d)>2$ (which guarantees the existence of strong solutions, see
Section~\ref{sec:existenceStrong}) we might achieve the same order of convergence. This seems to be
the case, even for $\alpha$ having components well below 2.

To study the convergence of the numerical algorithm as $\Delta t\to 0$, we first generate a simulation run
of $d$-dimensional Brownian motion $W$ over the interval $[0,1]$ with $\Delta t = 10^{-6}$, and use
it to generate a Euler--Maruyama solution $X$, which we regard as a proxy for ``ground truth."
Then, coarser Euler--Maruyama approximations are computed, using larger values of $\Delta t$:
$10^{-1}$, $10^{-2}$, $10^{-3}$, $10^{-4}$, and $10^{-5}$, extracting the Brownian increments from the
pre-computed $W$. This process is repeated 100 times and the results are averaged in order to
approximate the expectation in Equation~\eqref{eq:convergenceOrder}.

We performed the experiment above for the following values of $\alpha$: $(3,3,3)$, $(2,2,2)$, $(1,1,1)$,
$(0.5,0.5,0.5)$, $(0.1,0.1,0.1)$, and $(0.05,0.05,0.05)$. Fitting a least-squares line in log-log coordinates
provides the estimated convergence rate and order; the results are displayed in Figure~\ref{fig:convergenceRates}.
An approximate convergence order $\gamma=1/2$ seems to hold even for values of $\alpha$ below 1
(although the constant $C$ increases). However, the order of convergence does deteriorate as the 
components of $\alpha$ get close to zero. 

\begin{figure}
\includegraphics[scale=0.75]{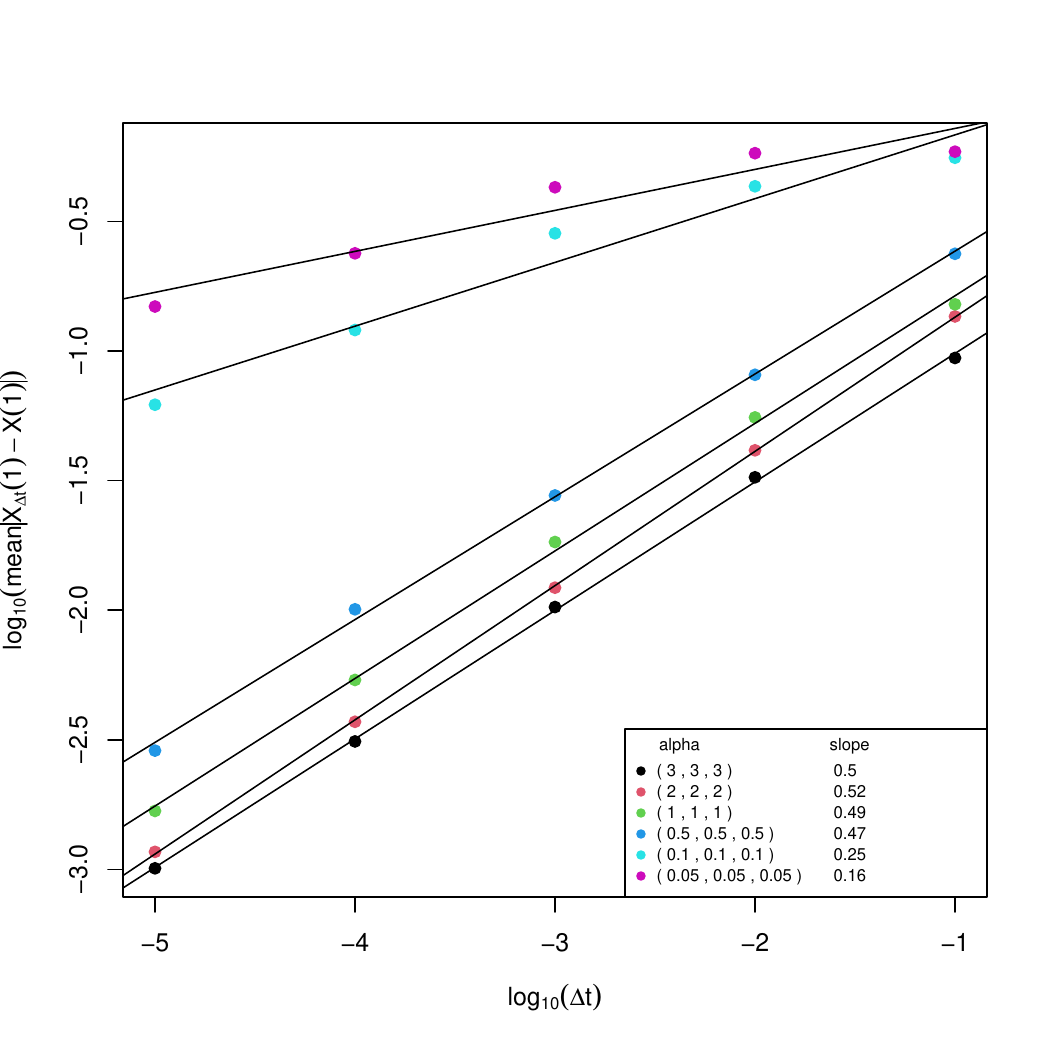}
\caption{
Numerical simulation of $E\left(  \| X(T) - X_{\Delta t}(T) \| \right)$ (see Equation~\eqref{eq:convergenceOrder}).
Each color represents
one experiment in 3 dimensions with parameter $\alpha$. A reference solution $X(t)$ for $t\in[0,1]$ was generated
using $\Delta t=10^{-6}$, and approximations were computed using larger values of $\Delta t$. Each
experiment was repeated 100 times and averaged to approximate $E\left(  \| X(1) - X_{\Delta t}(1) \| \right)$.
The lines were fitted in log scale, so the slopes are estimates of the convergence order $\gamma$, which
ends up being close to the theoretical value of $\gamma=1/2$ for all but the smallest values of $\alpha$.
}\label{fig:convergenceRates}
\end{figure}

Future work will explore the use of higher order numerical methods for solving SDEs, as well
as the possibility of having $\Delta t$ vary as the process approaches the boundaries, as this would likely
reduce the frequency of boundary hits and thus improve performance.

\section{Conclusions}

We have introduced a stochastic process which we believe is useful for modeling the evolution in time
of compositional measurements (i.e., a vector of non-negative values that add up to a total of 1). This model
is a diffusion, as it is defined as a solution for a SDE in the It\^o sense, and it has a Dirichlet distribution as
its steady distribution. We have named this process \emph{Dirichlet Diffusion} (DD).

The process is confined to the closed simplex $\bar\s\subset\R^d$, which has dimension $d-1$; also, the 
coefficients of the SDE~\eqref{canonicalSDE}
are locally Lipschitz, but not globally Lipschitz. Consequently, the usual theorems do not apply directly, and
establishing the existence of solutions required a somewhat delicate analysis. We were able to establish
the existence of strong solutions under the assumption $\min(\alpha_1,\dots,\alpha_d)>2$; for the general
case we were only able to establish the existence of weak solutions, but also that these solutions remain
confined to the closed simplex $\bar\s$ without need for reflecting boundaries. The question of whether
strong solutions exist when $\min(\alpha_1,\dots,\alpha_d)\leq2$ remains open; it is likely that an answer
will require a different approach.

An interesting property of DD is that $\mu$, which can be regarded as a shape parameter for the Dirichlet
distribution, appears only in the drift term, while $\alpha_0$, which can be regarded as a concentration or
spread parameter, appears only the diffusion term; the overall structure of the SDE remains unchanged
(for example, when components of $\alpha$ are smaller than 1 the Dirichlet density becomes unbounded,
while the terms in the SDE remain bounded).
This suggests that generalizations of the process can be obtained by considering models in which
$\mu$ or $\alpha_0$ are allowed to change in time.

A useful feature of DD as a model for compositional data is the property of aggregation 
(Section~\ref{sec:aggregation}). 
The example discussed in the Introduction is typical: A compositional phenomenon can often be described
at different levels of detail, like the nested taxonomic ranks of living organisms. This means that important features
of the process remain invariant regardless of the level of detail chosen, since $\theta$ and $\alpha_0$ 
remain the same for different levels
of aggregation. In particular, this indicates that estimation of those parameters can be carried out at the level of
aggregation that allows for more precision.

Work in progress includes methods for estimation and goodness of fit for DD processes.
The DD process can be described as a null model, describing the evolution of a system due purely to ``noise." 
Generalizations of the DD process can be obtained by allowing time-varying $\mu$ or $\alpha_0$, as mentioned
above, or by including a covariance matrix affecting the diffusion term, creating correlations at the time microscale.
In this case, the DD process can be used as the null hypothesis against which to test for the presence of such
further structure.

\appendix

\section{Proof of Theorem~\ref{thm:existenceweaksolutions}}

\begin{proof}[Proof of Theorem~\ref{thm:existenceweaksolutions}]

First we will show that weak solutions exist for modified versions of the SDEs under consideration.

We will use this fact: if a continuous function $f$ is defined on a compact, convex set $K\subset\R^d$, it can be
extended to a bounded continuous function $f^*$ defined on all of $\R^d$. Indeed, for all $x\in\R^d$,
define $f^*(x)=f(\pi_K(x))$, where $\pi_K$ is the projection map onto $K$, which is continuous.

Consider the SDE \eqref{canonicalSDE}; the argument for \eqref{adjustedSDE} is similar.

We constrain the domain to be the closed
cube $K=[0,1]^d$.
Each component of the drift term $b(X)=-\theta(X-\mu)$ 
and of the diffusion term 
$$\sigma(X)=\sqrt{\frac{2\theta}{\alpha_0}}\diag\left(\sqrt{X}\right)\left(S(X)I_d-\sqrt{X} \sqrt{X}^T\right)$$
is continuous and bounded on $K$; we extend each of them to continuous bounded functions 
defined on all of $\R^d$, by the method described above, to obtain
$b^*(X)$ and $\sigma^*(X)$, respectively, 
and define a modified version
of the SDE (which agrees with the original one on $K$). By the classical existence
result of Skhorohod (see, e.g., \cite{KS91}), given an initial distribution on $\R^d$ with support
contained in $\s$, there exists a unique (in distribution) weak solution for the modified SDE.

Let $\left((\Omega,\F,P),\{\F_t\},W(t),X(t)\right)$ be such a solution.
We will show now that $X(t)$ stays within $\bar\s\subset K$ for all $t\geq0$, $P$-a.s., and therefore
the solution is also a solution for the original SDE \eqref{canonicalSDE}.

First, we show that none of the components $X_i$, $i=1,\dots,d$, go below 0. 
To find the equation that determines
the behavior of $X_i$, we multiply the modified SDE on the left by the vector $e_i^T=[0,\dots,1,\dots,0]$
(with 1 in the $i$-th component) and we obtain, by a trivial application of Ito's formula, the equation
\begin{equation}\label{eq:sdeForXi}
dX_i = e_i^Tb^*(X)\,dt+e_i^T\sigma^*(X)\,dW
\end{equation}
Notice that $e_i^Tb^*(X)$ depends only on $X_i$:
$$
g(X_i):=e_i^Tb^*(X)=
\begin{cases}
\theta\mu_i & \textrm{if $X_i\leq 0$}\\
-\theta(X_i-\mu_i) & \textrm{if $0\leq X_i\leq 1$}\\
-\theta(1-\mu_i) & \textrm{if $1\leq X_i$}\\
\end{cases}
$$
Now, $e_i^T\sigma^*(X)$ is a row vector which, when $X\in K$, has the form
$$
\sqrt{X_i}\left(
\sqrt{X_iX_1},\dots,\sqrt{X_iX_{i-1}},1-X_i,\sqrt{X_i,X_{i+1}},\dots,\sqrt{X_iX_d}
\right)
$$
When $X_i\leq0$, $\pi_K(X)$ is on the face of the boundary of $K$ determined by
the hyperplane $X_i=0$, and therefore $e_i^T\sigma^*(X)=(0,\dots,0)$. Thus,
$e_i^T\sigma^*(X)$ can be written as $\sqrt{\max(0,X_i)}h(X)$, where $h(X)$ is defined
and continuous on $\R^d$. If we denote by $M$ the scalar local martingale defined by
$$
M(t)=\int_0^t h\left(X(s)\right)dW(s),
$$
then $X_i$ satisfies the martingale-driven SDE
\begin{equation}\label{eq:martingaleSdeForXi}
dX_i=g(X_i)\,dt+\sqrt{\max(0,X_i)}\,dM.
\end{equation}
That is, $\left((\Omega,\F,P),\{\F_t\},M(t),X_i(t)\right)$ is a weak solution of the
martingale-driven SDE \eqref{eq:martingaleSdeForXi}.

Now we apply the Comparison Theorem; see, for example, Proposition~2.18 in \cite{KS91},
modified to use a local martingale instead of Brownian motion.
Define the function
$$
g_0(x)=
\begin{cases}
0 & \textrm{if $x\leq \mu_i$}\\
-\theta(x-\mu_i) & \textrm{if $\mu_i\leq x\leq 1$}\\
-\theta(1-\mu_i) & \textrm{if $1\leq x$}\\
\end{cases}
$$
Clearly, $g_0$ is continuous and bounded, and $g_0(x)\leq g(x)$ for all $x\in\R$. 
The process constantly equal to zero is the solution of the SDE 
$$
dX=g_0(X)\,dt+\sqrt{\max(0,X)}\,dM.
$$
with initial value equal to zero. Since $0\leq X_i(0)$ a.s., by the Comparison Theorem,
we conclude that $0\leq X_i(t)$ for all $t\geq0$, a.s.

Second, we show that
$X(t)$ remains on the affine space
$x_1+\cdots+x_d=1$ for $0\leq t$
a.s. Indeed, let $S(t)=S(X(t))=\sum_i X_i(t)$; multiplying 
Equation~\eqref{canonicalSDE}
on the left by $1^T$ and a trivial application of Ito's Lemma, we get that $S(t)$ satisfies the equation
$$
dS(t) = -\theta(S(t)-1)dt +\sqrt{\frac{2\theta}{\alpha_0}}(1-S(t))\sqrt{X}^TdW,
$$
because
\begin{align*}
1^T\diag\left(\sqrt{X}\right)\left(I_d-\sqrt{X}\sqrt{X}^T\right) &= 
\left(\sqrt{X}^T - \sqrt{X}^T\sqrt{X}\sqrt{X}^T \right)\\
 &=(1-S)\sqrt{X}^T,
\end{align*}
since $\sqrt{X}^T\sqrt{X} = S$. 

Define $M(t)=\int_0^t\sqrt{X_s}^TdW_s$. Since $X(t)$ is a continuous semimartingale, and $\sqrt{X(t)}$ is
defined and continuous for $0\leq t\leq t_0$, $M(t)$ is a local martingale, and $S(t)$ satisfies the SDE
$$
dS(t) = -\theta(S(t)-1)dt +\sqrt{\frac{2\theta}{\alpha_0}}(1-S(t))dM(t).
$$
Since $X_0\in\s$ a.s., then $S_0=1$ a.s., and $S(t)$ constantly equal to 1 is a solution of the SDE. 
Uniqueness holds for this equation, so we conclude that
$\sum_i X_i(t)=1$ for $0\leq t$, a.s.

In conclusion, the solution $X(t)$ is contained in $\bar\s\subset K$. Since the coefficients of the
modified equation are identical to the coefficients of the original equation~\eqref{canonicalSDE} on $K$, then 
$\left((\Omega,\F,P),\{\F_t\},W(t),X(t)\right)$ is a solution for~\eqref{canonicalSDE}, a.s.
\end{proof}


\begin{thebibliography}{10}

\bibitem{JA82}
J.~Aitchison.
\newblock The statistical analysis of compositional data.
\newblock {\em Journal of the Royal Statistical Society: Series B (Statistical
  Methodology)}, 44(2):139--177, 1982.

\bibitem{balakrishnan2004primer}
N.~Balakrishnan and V.~Nevzorov.
\newblock {\em A Primer on Statistical Distributions}.
\newblock Wiley, 2004.

\bibitem{CaporasoEtAl2011}
J.~Caporaso, C.~Lauber, E.~Costello, D.~Berg-Lyons, A.~Gonzalez, J.~Stombaugh,
  D.~Knights, P.~Gajer, J.~Ravel, N.~Fierer, et~al.
\newblock Moving pictures of the human microbiome.
\newblock {\em Genome Biol}, 12(5):R50, 2011.

\bibitem{KS91}
I.~Karatzas and S.~Shreve.
\newblock {\em Brownian Motion and Stochastic Calculus}.
\newblock Springer-Verlag, 1991.

\bibitem{K12}
R.~Khasminskii.
\newblock {\em Stochastic Stability of Differential Equations}.
\newblock Springer, 2012.

\bibitem{kloeden2011numerical}
P.~Kloeden and E.~Platen.
\newblock {\em Numerical Solution of Stochastic Differential Equations}.
\newblock Stochastic Modelling and Applied Probability. Springer Berlin
  Heidelberg, 2011.

\bibitem{kbj2000}
S.~Kotz, N.~Balakrishnan, and N.~L. Johnson.
\newblock {\em Continuous Multivariate Distributions Volume 1: Models and
  Applications}.
\newblock Wiley-Interscience, New York, 2nd edition, 2000.

\bibitem{Rmanual2024}
{R Core Team}.
\newblock {\em R: A Language and Environment for Statistical Computing}.
\newblock R Foundation for Statistical Computing, Vienna, Austria, 2024.

\bibitem{S88}
C.~Soize.
\newblock Steady-state solution of {F}okker-{P}lanck equation in higher
  dimension.
\newblock {\em Probabilistic Engineering Mechanics}, 3(4):196--206, 1988.

\bibitem{S94}
C.~Soize.
\newblock {\em The Fokker-Planck equation for stochastic dynamical systems and
  its explicit steady state solutions}.
\newblock World Scientific Publishing Co., 1994.

\bibitem{VR-MASS2002}
W.~N. Venables and B.~D. Ripley.
\newblock {\em Modern Applied Statistics with S}.
\newblock Springer, New York, fourth edition, 2002.
\newblock ISBN 0-387-95457-0.

\end{thebibliography}
\end{document}